\def\real{\hbox{\rm\setbox1=\hbox{I}\copy1\kern-.45\wd1 R}}
\def\natural{\hbox{\rm\setbox1=\hbox{I}\copy1\kern-.45\wd1 N}}
\newtheorem{theorem}{Theorem}[section] 
\newtheorem{lemma}[theorem]{Lemma}     
\newtheorem{corollary}[theorem]{Corollary}
\newtheorem{proposition}[theorem]{Proposition}
\newtheorem{case}[theorem]{Case}
\newtheorem{question}[theorem]{Question}
\newtheorem{definition}[theorem]{Definition}
\title{Over Recurrence for Mixing Transformations}
\author[T. M. Adams]{Terrence M. Adams}
\email{terry@ganita.org}
\date{\today}
\begin{document}

\begin{abstract} 
We show that every invertible strong mixing transformation on a Lebesgue space 
has strictly over-recurrent sets.  Also, we give an explicit procedure for constructing 
strong mixing transformations with no under-recurrent sets. 
This answers both parts of the first question posed in \cite{Berg96}.  

We define $\epsilon$-over-recurrence and show that given $\epsilon > 0$,
any ergodic measure preserving invertible transformation (including discrete spectrum) 
has $\epsilon$-over-recurrent sets of arbitrarily small measure. 
Discrete spectrum transformations and rotations do not have over-recurrent sets, 
but we construct a weak mixing rigid transformation with strictly over-recurrent sets.
\end{abstract}

\subjclass[2010]{Primary 37A25; Secondary 28D05}
\keywords{mixing, Khitchine recurrence, over-recurrence, under-recurrence, singular spectrum}

\maketitle

\section{Introduction}
We answer a two-part question posed in \cite{Berg96}.  It is the first question raised 
in \cite{Berg96_2} on page 50. 

\begin{question}
\label{Q1}
``Is it true that for any invertible mixing measure 

preserving system 
$(X, \mathcal{B}, \mu, T)$ there exists $A \in \mathcal{B}$ with 

$\mu (A) > 0$ 
such that for all $n\neq 0$, $\mu (A\cap T^n A) < \mu (A)^2$?  

How about the 
reverse inequality $\mu (A\cap T^n A) > \mu (A)^2$''
\end{question}

\noindent 
The answer is different for each part.  Respectively, the answers are "no", 
and "yes".  One of the key differences is a basic 
lemma on set intersections which is presented in Lemma \ref{lem1}.  
However, this lemma alone is not sufficient, and in particular, 
this lemma is pointed out in \cite{Berg96_2}.  
In the next section, we prove that the answer to the second part 
is "yes".  This is done by constructing a set of positive measure 
such that the given mixing transformation mixes the set slowly.  
This answers the same question raised in \cite{BFW16}. 

The notions of over-recurrent, under-recurrent, strictly over-recurrent and 
strictly under-recurrent are defined in \cite{BFW16} as a means 
for addressing Question \ref{Q1} and similar questions.  
We expand these definitions to include the weaker notion 
of $\epsilon$-over-recurrent and $\epsilon$-under-recurrent.   
See section \ref{prelim} for definitions.  
It is straight-forward to show 
that any partially rigid transformation has no under-recurrent set.  
The class of partially rigid transformations is larger than the class of rigid transformations 
and was first introduced by N. Friedman in \cite{Fri89}.  
As a preliminary result, we give a short proof that any discrete spectrum transformation (or rotation) 
does not have an over-recurrent set.  
However, we show that given an invertible ergodic measure preserving transformation $T$ and 
$\epsilon > 0$, $T$ has $\epsilon$-over-recurrent sets with arbitrarily small measure.  
Also, we construct a rigid weak mixing transformation that has 
a strictly over-recurrent set.  The question 
of whether every weak mixing transformation has an over-recurrent set remains open. 

To answer the first part of Question \ref{Q1}, we give a general 
procedure for constructing a strong mixing transformation 
from an input mixing transformation and an arbitrary rigid transformation.  
We gradually diminish the effects of the rigid transformation, but 
in the process, build a strong mixing transformation that acts 
like a rigid transformation on a shrinking part of the measure space. 
We use a technique from \cite{Towerplex1} to produce this 
slow mixing transformation. 

Finally, in the last section, we point out that the same construction 
for producing a slow mixing transformation can be used to construct 
a strong mixing transformation with singular spectrum 
from any strong mixing transformation.  
Thus, any strong mixing transformation can be multiplexed 
with any rigid transformation to produce a transformation that 
is mixing of all orders. 

\section{Preliminaries}
\label{prelim}
All transformations are assumed to be invertible, ergodic and measure preserving 
on a fixed Lebesgue probability space $(X, \mathcal{B}, \mu )$, 
and all sets are assumed to be measurable. 
Let $\natural = \{1, 2, \ldots \}$ be the natural numbers 
and $\mathbb{Z}$ the set of integers. 
The following definitions are expanded from \cite{BFW16}.  
\begin{definition}
Let $A$ be a measurable set such that $0 < \mu (A) < 1$. 
\begin{enumerate}
\item Set $A$ is over-recurrent if 
$\mu (T^nA\cap A) \geq \mu (A)^2$ for $n\in \mathbb{Z}$.
\item Set $A$ is under-recurrent if 
$\mu (T^nA\cap A) \leq \mu (A)^2$ for $n\in \natural$.
\item Set $A$ is strictly over-recurrent if 
$\mu (T^nA\cap A) > \mu (A)^2$ for $n\in \mathbb{Z}$.
\item Set $A$ is strictly under-recurrent if 
$\mu (T^nA\cap A) < \mu (A)^2$ for $n\in \natural$.
\item Set $A$ is $\epsilon$-over-recurrent if 
$\mu (T^nA\cap A) > (1 - \epsilon )\mu (A)^2$ for $n\in \mathbb{Z}$.
\item Set $A$ is $\epsilon$-under-recurrent if 
$\mu (T^nA\cap A) < (1 + \epsilon )\mu (A)^2$ for $n\in \natural$.
\end{enumerate}
\end{definition}
These definitions are motivated by the Khintchine recurrence theorem \cite{Khi34}. 
It was shown in \cite{BFW16} that there exist mixing transformations 
with no under-recurrent sets.  Also, under-recurrent functions are defined, 
and it is shown that any transformation with singular maximal spectral type 
has no under-recurrent function.  While we give a general construction 
of mixing transformations with no under-recurrent sets, our main results 
concern (strictly) over-recurrent sets.  
All results were obtained independently 
of \cite{BFW16}.  

\section{Over-recurrent Sets}
This section focuses on results related to (strictly) over-recurrent sets.  
First, we prove that any strong mixing transformation has a strictly 
over-recurrent set. 
\begin{theorem}
\label{mix-over-rec}
Let $T$ be an invertible mixing transformation on a Lebesgue probability space. 
Then $T$ has strictly over-recurrent sets $A$ of arbitrarily small measure.  
In particular, $\mu (T^nA\cap A) > \mu (A)^2$ for all $n \in \mathbb{Z}$.  
\end{theorem}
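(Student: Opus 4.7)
The plan is to split the desired strict inequality $\mu(T^n A \cap A) > \mu(A)^2$ into two regimes and handle them with different tools: a finite combinatorial regime of small-to-medium $|n|$ controlled by a Rokhlin tower construction, and an asymptotic regime of large $|n|$ controlled by strong mixing itself. Fix a target measure $\alpha \in (0,1)$ and a small $\delta > 0$. By strong mixing, first pick $N$ such that $|\mu(T^n B \cap B) - \mu(B)^2| < \delta$ for all $|n| \geq N$ and all measurable $B$ with $\mu(B) \leq \alpha$. Then apply Rokhlin's lemma to obtain a tower $F, TF, \ldots, T^{H-1}F$ with $H\mu(F) > 1 - \epsilon$ and $H$ much larger than $N$. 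Take $A = \bigcup_{k \in S} T^k F$ for an index set $S \subset \{0, 1, \ldots, H-1\}$ with $|S|\mu(F) = \alpha$, to be chosen.

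For $|n| < H$, the intersection decomposes, up to the small error of the tower's complement, as $\mu(T^n A \cap A) \approx |S \cap (S - n)|\, \mu(F)$, and the desired inequality in this range becomes the combinatorial condition $|S \cap (S-n)| > |S|^2 \mu(F) \approx |S|^2 / H$ for every $1 \leq |n| < H$; i.e.\ every nonzero autocorrelation of $S$ must strictly exceed the ``random'' baseline. The combinatorial heart of the proof is to design such an $S$. Lemma \ref{lem1} is presumably the tool that either produces it outright or, more plausibly, certifies that starting from a natural initial choice of $S$, any finitely many surviving bad indices can be corrected by adjoining a small auxiliary piece built from a much taller Rokhlin tower whose own autocorrelations are concentrated exactly on those indices, while its additional contribution to $\mu(A)^2$ is negligible.

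The main obstacle is the interface between this finite combinatorial regime and the mixing tail at $|n| \geq N$. Strong mixing guarantees that $\mu(T^n A \cap A)$ lies within $\delta$ of $\mu(A)^2$ for $|n| \geq N$, but says nothing about the \emph{direction} of the deviation, so strict inequality on the tail is not free. My plan to resolve this is an iterative refinement: after the initial construction, enumerate the finitely many indices (necessarily a finite set, by mixing and the choice of $N$) where the strict inequality fails, adjoin a correction of geometrically smaller measure designed to boost those specific intersections above $\mu(A)^2$ without disturbing the already-controlled ones, and verify that the sequence of corrections converges in $L^1$ to a limit set $A$ with the required property. Balancing the quantitative budget of each correction against the mixing rate and the tower error is where the delicate estimate, supplied by Lemma \ref{lem1}, does the real work; the case $n = 0$ is automatic since $\mu(A) < 1$.
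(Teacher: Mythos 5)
Your overall instinct—control small $|n|$ by a Rokhlin-tower construction and large $|n|$ by mixing—points in the right direction, but three steps in the plan are genuinely broken. First, you choose $N$ so that $|\mu(T^nB\cap B)-\mu(B)^2|<\delta$ for $|n|\geq N$ \emph{for all} sets $B$ of measure at most $\alpha$, before $A$ is built; no such uniform $N$ exists, since mixing admits no uniform rate over all measurable sets (the paper itself remarks on this, citing Krengel). Second, the combinatorial condition you ask of $S$ is impossible: since $\sum_{n\neq 0}|S\cap(S-n)|=|S|^2-|S|$, the nonzero autocorrelations of $S\subset\{0,\dots,H-1\}$ average strictly below the baseline $|S|^2\mu(F)\approx|S|^2/H$ over the roughly $2H$ relevant shifts, so they cannot all strictly exceed it for every $1\leq|n|<H$ (concretely, at shifts near $H$ the autocorrelation is at most $1$ while the baseline is about $\alpha^2H$). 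Third, the repair step rests on the claim that the set of $n$ with $\mu(T^nA\cap A)\leq\mu(A)^2$ is finite "by mixing"; this is false—mixing gives two-sided convergence to $\mu(A)^2$ and says nothing about the sign of the deviation, so there may be infinitely many bad $n$, the enumeration never terminates, and each correction changes $\mu(A)^2$ and can create new bad times. (Also, Lemma \ref{lem1} is not the engine of this theorem in the paper; it is used later for the slow-mixing construction answering the other half of the question.)

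The paper's proof avoids all three problems by never trying to extract a strict inequality from mixing alone or from a single tower. It builds $A=\bigcup_{i\geq 1}A_i$ with $\mu(A_i)=a/(i(i+1))$, $a<1/4$, where each mixing time $N_k$ is chosen \emph{after} the finite union $C_k=\bigcup_{i\leq k}A_i$ is known (so no uniformity is needed), and $A_{k+1}$ is carved out of long Rokhlin columns of height $m_kN_k$ over the complement of $C_k$, so that for $|n|<N_k$ the shifted piece $T^nA_{k+1}$ stays almost entirely inside $A$, giving $\mu(T^nA_{k+1}\cap A)>(1-\epsilon_k)\mu(A_{k+1})$. Then for $N_k\leq n<N_{k+1}$ one bounds $\mu(T^nA\cap A)\geq(1-\epsilon_k)\mu(C_k)^2+(1-\epsilon_k)\mu(\bigcup_{i\geq k+2}A_i)$: the not-yet-mixed tail contributes a term \emph{linear} in its measure $a/(k+2)$, which dominates both the quadratic deficit $a^2-(a-\tfrac{a}{k+1})^2$ and the possible downward mixing error, precisely because $a<1/4$ (inequality (\ref{eps1})). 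Strictness for every $n\in\mathbb{Z}$ thus comes from an infinite hierarchy of small, nearly invariant pieces always held in reserve—an idea your two-regime split with a single tower and finitely many corrections does not supply and cannot be patched to supply locally.
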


We use the following lemma in the construction of the over-recurrent sets. 
For completeness, a proof is included. 
\begin{lemma}
\label{seq1}
Let $a > 0$ be a real number.  
Define $a_i = \frac{a}{i(i+1)}$ for $i \in \natural$. 
Then for $k \in \natural$, 
\[
( \sum_{i=1}^{k} a_i  )^2 + \sum_{i=k+2}^{\infty} a_i  > 
a^2 \Big( 1 + \frac{k(1-2a) + (1 - 3a)}{a(k+1)(k+2)} \Big) . 
\]
\end{lemma}

\noindent {\bf Proof:}
First, note that 
\[
\sum_{i=1}^{\infty} a_i = a\sum_{i=1}^{\infty} ( \frac{1}{i} - \frac{1}{i+1} ) = a . 
\]
Thus, 
\begin{eqnarray*}
( \sum_{i=1}^{k} a_i  )^2 + ( \sum_{i=k+2}^{\infty} a_i ) &=& 
(a - \frac{a}{k+1})^2 + \frac{a}{k+2} \\ 
&=& a^2 - \frac{2a^2}{k+1} + \frac{a^2}{(k+1)^2} + \frac{a}{k+2} \\ 
&>& a^2 - \frac{2a^2}{k+1} + \frac{a^2}{(k+1)(k+2)} + \frac{a}{k+2} \\ 
&=& a^2 + \frac{ - 2a^2 (k+2) + a^2 + a(k+1) }{ (k+1)(k+2) } \\ 
&=& a^2 + \frac{ k ( a - 2a^2 ) + (a - 3a^2) }{ (k+1)(k+2) } \\ 
&=& a^2 \Big( 1 + \frac{ k ( 1 - 2a ) + (1 - 3a) }{ a(k+1)(k+2) } \Big) . \Box 
\end{eqnarray*}

\noindent {\bf Proof of Theorem \ref{mix-over-rec}:} 
Let $a\in \real$ be such that $0 < a < \frac{1}{3}$ and let $a_i = \frac{a}{i(i+1)}$ 
for $i\in \natural$.  
For $j \in \natural$, choose $\epsilon_j > 0$ and decreasing such that 
\begin{eqnarray}
\label{eps1} (1 - \epsilon_j)\bigg( 1 + \frac{j(1 - 2a) + (1 - 3a)}{a(j+1)(j+2)} \bigg) > 1 . 
\end{eqnarray}
We will define an infinite sequence $A_i$ of disjoint measurable sets such that 
$\mu (A_i) = a_i$ for $i\in \natural$, and $A=\bigcup_{i=1}^{\infty} A_i$.  
Let $A_1$ be any set with measure ${a} / {2}$. 
Since $T$ is mixing, there exists $N_1 \in \natural$ such that for $| n | \geq N_1$, 
\[
| \mu (T^n A_1 \cap A_1) - \mu (A_1)^2 | < \epsilon_1 \mu (A_1)^2 . 
\]
Choose $m_1 \in \natural$ such that 
\[
m_1 > \max{ \{ \frac{1}{\epsilon_1 a_2}, N_1 \} } . 
\]
Let $B_1$ be the base of a Rohklin tower of height $m_1^2$ such that 
\[
\mu ( \bigcup_{i=0}^{m_1^2 - 1} T^i B_1 ) > 1 - \epsilon_1 . 
\]
Choose a subset $I_1 \subset B_1$ such that the set 
\[
A_2 = \{ T^i x : x\in I_1, 0\leq i < m_1^2, T^i x \notin A_1 \} 
\]
satisfies $\mu (A_2) = a_2$.  
Note that for $i \in \natural$ such that $|i| < N_1$, 
\[
\mu (T^i A_2 \cap A) > \mu (A_2) - \frac{1}{m_1} > \mu(A_2) - \epsilon_1 a_2 = (1 - \epsilon_1) \mu(A_2) . 
\]
See the appendix for a visual representation of $A_1$ and $A_2$. 

We repeat this inductively.  Given $A_1, A_2, \ldots A_k$, choose 
$A_{k+1}$ in the following manner.  
Let $C_k = \cup_{i=1}^{k} A_i$.  
Since $T$ is mixing, there exists $N_k > N_{k-1}$ such that for $| n | \geq N_k$, 
\[
| \mu (T^n C_k \cap C_k) - \mu (C_k)^2 | < \epsilon_k \mu (C_k)^2 . 
\]
Choose $m_k \in \natural$ such that 
\[
m_k > \max{ \{ \frac{1}{\epsilon_k a_{k+1}}, N_k \} } .
\]
Let $B_k$ be the base of a Rohklin tower of height $m_k^2$ such that 
\[
\mu ( \bigcup_{i=0}^{m_k^2 - 1} T^i (B_k) ) > 1 - \epsilon_k . 
\]
Choose a subset $I_k \subset B_k$ such that the set 
\[
A_{k+1} = \{ T^i x : x\in I_k, 0\leq i < m_k^2, T^i x \notin C_k \} 
\]
satisfies $\mu (A_{k+1}) = a_{k+1}$.  
The set $A_{k+1}$ has the property that for $i \in \natural$ such that $|i| < N_k$, 
\[
\mu (T^i A_{k+1} \cap A) > (1 - \epsilon_k) \mu (A_{k+1}) . 
\]

Now, we show the set $A = \bigcup_{i=1}^{\infty} A_i$ is over-recurrent.  
If $n$ is a natural number such that $|n| < N_1$, then 
\begin{eqnarray*}
\mu (T^n (\bigcup_{k=2}^{\infty}A_k) \cap A) &>& (1 - \epsilon_1) \mu (\bigcup_{k=2}^{\infty} A_k) 
= \frac{1}{2}(1 - \epsilon_1)a \\ 
&>& \frac{1}{2} (1 - \frac{2 - 6a}{6}) a = (\frac{1}{3} + \frac{a}{2}) a > a^2 .
\end{eqnarray*}

Let $k\in \natural$, and $n\in \natural$ be such that $N_k \leq n < N_{k+1}$.  
The set $A$ is a disjoint union of the following three sets: 
$C_k, A_{k+1}, \bigcup_{i=k+2}^{\infty} A_i$.  
For convenience, set $C_{k,1} = C_k$, $C_{k,2} = A_{k+1}$ 
and $C_{k,3}=\bigcup_{i=k+2}^{\infty}A_i$. 
Thus,
\begin{eqnarray*}
\mu (T^n A \cap A) &=& 
\sum_{i=1}^{3} \mu (T^n C_{k,1} \cap C_{k,i}) + \sum_{i=2}^{3} \mu (T^n C_{k,i} \cap A) \\ 
&>& (1 - \epsilon_k) \mu (C_{k,1})^2 + (1 - \epsilon_{k+1}) \mu (C_{k,3}) \\ 
&\geq& (1 - \epsilon_k) \bigg( (a - \frac{a}{k+1})^2 + \frac{a}{k+2} \bigg) \\ 
&>& (1 - \epsilon_k) a^2 \bigg( 1 + \frac{k(1-2a) + 1 - 3a}{a(k+1)(k+2)} \bigg) \mbox{, by Lemma \ref{seq1},} \\ 
&>& a^2 ,\ \ \ \mbox{by ( \ref{eps1} ) } .\ \ \ \ \Box
\end{eqnarray*}

Note, the method used for choosing $A_i$ can be used to show that mixing 
transformations have no uniform rate over all measurable sets.  
Given any sequence $\delta_i \to 0$, there exist parameters $\epsilon_i, N_i, m_i$ 
and $A_i$ such that 
\[
\lim_{n\to \infty} \frac{\delta_n}{ \mu (T^n A \cap A) - \mu (A)^2 } = 0 . 
\] 
This is already well known to be true, and follows from a general argument 
of Krengel \cite{Kre78} on the lack of uniform rates for the ergodic theorem. 

\subsection{Over-recurrence for non-mixing transformations}
The previous result can be used to construct a rigid weak mixing transformation 
that has a strictly over-recurrent set.  First, we show that any discrete spectrum 
transformation does not have an over-recurrent set. 

\begin{proposition}
If $T$ has discrete spectrum, then $T$ has no over-recurrent sets. 
\end{proposition}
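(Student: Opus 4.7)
The plan is to use the Halmos--von Neumann representation of $T$ as a group rotation, and then combine density of the orbit with continuity of the autocorrelation function.

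Since $T$ is ergodic with discrete spectrum on a Lebesgue space, Halmos--von Neumann gives a measure-theoretic isomorphism of $T$ with a translation $Sx = x + g_0$ on a compact metric abelian group $G$ equipped with normalized Haar measure $\mu$, where $\{ng_0 : n \in \mathbb{Z}\}$ is dense in $G$. Working in this model, I would fix a candidate set $A \subset G$ with $0 < \mu(A) < 1$ and introduce the autocorrelation function $f(g) := \mu\bigl((A-g)\cap A\bigr) = \langle \mathbf{1}_{A-g}, \mathbf{1}_A\rangle_{L^2(\mu)}$ for $g \in G$. Three facts about $f$ do all the work: $f(0) = \mu(A)$; $f$ is continuous on $G$, since translation is continuous in $L^2(\mu)$; and $\int_G f\,d\mu = \mu(A)^2$ by Fubini and translation invariance of Haar measure.

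Suppose for contradiction that $A$ is over-recurrent. Then $f(ng_0) = \mu(S^nA \cap A) \geq \mu(A)^2$ for every $n \in \mathbb{Z}$. Density of the orbit $\{ng_0\}$ in $G$, together with continuity of $f$, upgrades this inequality to $f(g) \geq \mu(A)^2$ for all $g \in G$. Combined with $\int_G f\,d\mu = \mu(A)^2$, this forces $f \equiv \mu(A)^2$ on $G$. But $f(0) = \mu(A) > \mu(A)^2$ since $0 < \mu(A) < 1$, a contradiction.

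The only genuinely non-routine ingredient is the Halmos--von Neumann isomorphism; everything else is a short topological argument (a continuous function that majorizes its mean and integrates to its mean must be constant). A purely spectral variant of the argument would avoid passing to a group rotation: writing $\mathbf{1}_A = \mu(A) + h$ with $h \perp \mathbf{1}$ and expanding $h$ in eigenfunctions, the correlation $\mu(T^n A \cap A) - \mu(A)^2 = \sum_{k\neq 0} |c_k|^2 \lambda_k^n$ is a non-negative Bohr almost periodic sequence on $\mathbb{Z}$ with zero Cesaro mean, hence identically zero; this contradicts its positive value $\mu(A)-\mu(A)^2$ at $n=0$. Either way, the main (and only) subtlety is checking continuity of $f$, or equivalently the absence of jumps in the almost periodic correlation sequence.
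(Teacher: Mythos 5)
Your proof is correct, but it follows a genuinely different route from the paper. You invoke the Halmos--von Neumann theorem to model $T$ as an ergodic rotation on a compact abelian group, and then argue with the autocorrelation $f(g)=\mu((A-g)\cap A)$: continuity of $f$, density of the orbit of $g_0$, and the identity $\int_G f\,d\mu=\mu(A)^2$ force $f\equiv\mu(A)^2$ under over-recurrence, contradicting $f(0)=\mu(A)>\mu(A)^2$; your spectral variant (a nonnegative Bohr almost periodic correlation sequence with zero mean must vanish, yet it equals $\mu(A)-\mu(A)^2$ at $n=0$) is a sound alternative that avoids the structure theorem. The paper instead works with a sequence of refining Rokhlin towers of heights $h_n$ with $h_{n+1}=k_nh_n$, approximates $A$ by a union $J$ of levels so that $\mu(T^{ih_m}A\cap A)>(1-\delta)\mu(A)>\mu(A)^2$ for all $i$, and then uses the $L^2$ ergodic theorem, $\frac{1}{h_n}\sum_{i=0}^{h_n-1}\mu(T^iA\cap A)\to\mu(A)^2$, to conclude that the strictly-too-large correlations along the positive-density set $\{ih_m\}$ must be compensated by some $n=ih_m+j$ with $\mu(T^nA\cap A)<\mu(A)^2$, contradicting over-recurrence. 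The trade-off: the paper's argument is elementary and self-contained (towers plus the mean ergodic theorem, no structure theorem), but its exact-recurrence tower structure is really tailored to odometer-like (rational) discrete spectrum, with rotations handled by a separate remark; your argument needs the Halmos--von Neumann representation (or the almost periodicity of the correlation sequence) but treats all ergodic discrete spectrum transformations, irrational rotations included, in one uniform stroke. Both proofs exploit the same underlying tension between over-recurrence and the fact that correlations average to $\mu(A)^2$.
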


\noindent {\bf Proof:} 
Let $A$ be any measurable set such that $0 < \mu (A) < 1$.  
Since $T$ has discrete spectrum, there exist a sequence of 
refining towers of heights $h_n$ and integers $k_n \geq 2$ 
such that $h_{n+1} = k_n h_n$. 
Choose $m$ such that the tower of height $h_m$ has a union $J$ 
of levels that approximates $A$. 
In particular, choose $\delta$ and $m$ such that 
$2\delta < 1 - \mu (A)$ and 
\[
\mu (A\triangle J) < \frac{\delta \mu (A)}{4} . 
\]
Note that $\mu (T^{ih_m} J \cap J) = \mu (J)$ for all $i \in \mathbb{Z}$. 
Thus, $\mu (T^{ih_m} A \cap A) > (1 - \delta ) \mu (A) > \mu (A)^2$ for all $i \in \mathbb{Z}$.  
By the $L_2$ ergodic theorem, 
\[
\lim_{n\to \infty} \frac{1}{h_n} \sum_{i=0}^{h_n - 1} \mu (T^i A\cap A) = \mu (A)^2 . 
\]
Since $\{ ih_m | i\in \natural \}$ forms a subsequence of positive density in $\natural$, 
then there must exist $i$ and $j$ such that 
$0 < j < h_m$ and 
\[
\mu (T^{ih_m + j} A \cap A) < \mu (A)^2 . \ \ \Box 
\]
A similar argument can be used to show that ergodic rotations do not have over-recurrent sets. 

The following may seem a bit surprising intuitively, but it is not difficult to prove. 
\begin{proposition}
\label{erg-over-rec}
Given any invertible ergodic measure preserving transformation $T$ and $\epsilon > 0$, 
$T$ has $\epsilon$-over-recurrent sets of arbitrarily small measure.
\end{proposition}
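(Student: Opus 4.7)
The plan is to build $A$ inside a sufficiently tall Rokhlin tower by selecting a ``combinatorially rich'' set of horizontal slices. Given $\epsilon > 0$ and a target measure $\delta > 0$, I would fix a large integer $N$ and apply Rokhlin's lemma to obtain a tower of height $N$ with base $B$ of measure $(1-\eta)/N$, covering $1-\eta$ of $X$ for some $\eta$ small compared to $\epsilon$. Then I would pick $J \subset \{0,1,\dots,N-1\}$ of cardinality $L = \lfloor \delta N \rfloor$ and take $A = \bigsqcup_{j\in J} T^j B$, so that $\mu(A)\approx \delta$. A standard Hoeffding-plus-union-bound argument applied to a uniformly random $J$ of size $L$ shows that one can arrange $|J\cap(J-n)| \geq (1-\epsilon/2)L^2/N$ simultaneously for every $n$ with $|n|<N$, provided $N$ is large enough in terms of $\delta$ and $\epsilon$.

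With this choice, the disjointness of the tower levels gives the clean identity $\mu(T^nA\cap A) = |J\cap(J-n)|\mu(B)$ for $|n|<N$. Combined with the bound on $|J\cap(J-n)|$, this is at least $(1-\epsilon/2)(L\mu(B))^2/(1-\eta)$, which after absorbing the $\eta$-error exceeds $(1-\epsilon)\mu(A)^2$. Hence the $\epsilon$-over-recurrence condition holds for every $n$ in the window $(-N,N)$.

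The main obstacle is the case $|n|\geq N$: here $T^nA$ sits outside the tower (except for its return through the small error set), and the tower-level calculation above yields no usable lower bound. To plug this gap I would iterate the construction in the style of Theorem \ref{mix-over-rec}, writing $A = \bigsqcup_l A_l$ where each $A_l$ is built on its own Rokhlin tower of rapidly increasing height $N_l$. Then for $n$ in the window $[N_k,N_{k+1})$, the Rokhlin-plus-ergodic-theorem bound $\mu(T^nA_l\cap A) > (1-\epsilon_k)\mu(A_l)$ applies to every $l\geq k+2$, producing
\[
\mu(T^nA\cap A) \;\geq\; (1-\epsilon_k)\sum_{l\geq k+2}\mu(A_l).
\]
Unlike in Theorem \ref{mix-over-rec}, we cannot invoke mixing to control $\mu(T^n C_k\cap C_k)$; however, because we are only after $(1-\epsilon)\mu(A)^2$ rather than $\mu(A)^2$ exactly, the tail sum alone suffices once $\mu(A)$ is taken small enough and the weights $\{\mu(A_l)\}$ are chosen to decay slowly enough that $\sum_{l\geq k+2}\mu(A_l) > (1-\epsilon)\mu(A)^2/(1-\epsilon_k)$ for all relevant $k$. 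The freedom to make $\mu(A)$ as small as we like is precisely what lets us drop the mixing hypothesis of Theorem \ref{mix-over-rec}.
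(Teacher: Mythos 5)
Your first step---a random union of levels in one Rokhlin tower to control the window $|n|<N$---is fine in spirit, but the claim that $|J\cap(J-n)|\geq(1-\epsilon/2)L^{2}/N$ can be arranged for \emph{every} $0<|n|<N$ is already impossible: for $|n|$ close to $N$ there are at most $N-|n|$ indices $j$ with both $j$ and $j+n$ in $\{0,1,\dots,N-1\}$, so $|J\cap(J-n)|\leq N-|n|$, which drops below $\delta^{2}N$; a single tower of height $N$ can only control shifts up to a fixed fraction of $N$, and there is no wrap-around in a Rokhlin tower to make up the difference. That defect is patchable by shrinking the window. The fatal gap is in your second step, where you handle $|n|\geq N$ by the bound $\mu(T^{n}A\cap A)\geq(1-\epsilon_{k})\sum_{l\geq k+2}\mu(A_{l})$ and propose to choose the weights so that this exceeds $(1-\epsilon)\mu(A)^{2}$ ``for all relevant $k$''. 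Every $k$ is relevant, since $n$ ranges over all of $\mathbb{Z}$, and because $\sum_{l}\mu(A_{l})=\mu(A)<\infty$ the tails $\sum_{l\geq k+2}\mu(A_{l})$ tend to $0$: no matter how slowly the weights decay, they eventually fall below the fixed positive threshold $(1-\epsilon)\mu(A)^{2}$. Taking $\mu(A)$ small does not help, because the threshold is fixed once $A$ is fixed, while the tail available at stage $k$ becomes an arbitrarily small fraction of $\mu(A)$. So for all sufficiently large $n$ your lower bound is vacuous; the bulk $C_{k}$ of $A$ contributes $\mu(T^{n}C_{k}\cap A)$, which is precisely the term that mixing controlled in Theorem \ref{mix-over-rec}, and you have supplied no substitute for it. Controlling arbitrarily large shifts without mixing is the entire content of Proposition \ref{erg-over-rec}, and your proposal does not address it.

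For comparison, the paper does not build $A$ intrinsically for $T$ at all. It takes a rank-one strong mixing transformation $S$, which by Theorem \ref{mix-over-rec} has a strictly over-recurrent set $A$ of measure less than $\delta$, approximates $A$ by a union $J$ of levels of a single tall $S$-tower with tiny residual, builds a Rokhlin tower for $T$ of the same height $h$ with residual of order $\epsilon\mu(A)/(4h)$, and transports $J$ level-by-level (preserving the order of levels) to a set $K$ in the $T$-tower; the $\epsilon$-over-recurrence of $K$ is then inherited from the structure already certified on the $S$ side, where mixing is available, rather than re-derived by an inductive multi-tower construction for $T$. If you want to rescue your approach, you need some mechanism of this kind---importing a pattern whose over-recurrence is verified in a system where large shifts can be controlled---because the tail-sum argument cannot do that job.
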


\noindent {\bf Proof:} 
Let $\epsilon, \delta > 0$.  Let $S$ be a rank-one strong mixing transformation.  
By Theorem \ref{mix-over-rec}, $S$ has a strictly over-recurrent set $A$ 
with measure less than $\delta$.  
Choose a tower for $S$ of height $h$ and a union $J$ of levels from the tower 
that approximate $A$ well, and such that the complement of the tower 
has measure less than ${\epsilon \mu (A)}/{4}$.  
Also, assume 
\[
\mu (A \triangle J) < \frac{\epsilon}{4} \mu (A) . 
\]
Choose a Rokhlin tower for $T$ of height $h$ such that the complement of the tower 
has measure less than ${\epsilon \mu (A)}/{4h}$.  
There is a one-to-one onto correspondence between the levels of the $T$ tower 
and the levels of the $S$ tower.  Take the correspondence that preserves the order 
of the levels from top to bottom of the towers. 
The set $J$ in the $S$ tower matches a set $K$ in the $T$ tower. 
It is not difficult to prove that the set $K$ is $\epsilon$-over-recurrent 
for the transformation $T$. 
$\ \Box$

\subsection{Rigid weak mixing transformations with strictly over-recurrent sets}
We prove there exist rigid weak mixing transformations with strictly over-recurrent sets.  
\begin{theorem}
There exist rigid weak mixing transformations $T$ and sets $A$ such that 
for all $n \in \mathbb{Z}$, 
\[
\mu (T^n A\cap A) > \mu (A)^2 . 
\]
\end{theorem}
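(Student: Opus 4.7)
The plan is to graft the strictly over-recurrent set from Theorem \ref{mix-over-rec} onto a rank-one cutting-and-stacking construction engineered to be both rigid and weak mixing. I would begin by fixing a rank-one strong mixing transformation $S$ together with a strictly over-recurrent set $A_0$ of small measure $a<1/4$, supplied by Theorem \ref{mix-over-rec}, and recording the strict gap $\gamma_n := \mu(S^n A_0 \cap A_0) - a^2 > 0$ for each $n\neq 0$.

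The transformation $T$ would then be built by an inductive cutting-and-stacking procedure with towers of heights $h_k$. At stage $k$, the $T$-tower should contain a distinguished subcolumn of measure approximately $a$ whose combinatorics copy a long initial segment of a Rokhlin tower for $S$ in which $A_0$ is a union of levels, accurately enough that the limit set $A$ satisfies
\[
\mu(T^n A \cap A) \;\geq\; a^2 + \tfrac{1}{2}\gamma_n \quad \text{for all } 0<|n|\leq N_k,
\]
with $N_k \to \infty$. Between stages I would insert independent spacers chosen both to enforce rigidity along the heights $h_k$ (so that $\mu(T^{h_k} B \cap B) \to \mu(B)$ for every measurable $B$) and to break commensurability, in the spirit of classical Chacon-type constructions, so that $T$ is weak mixing.

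Over-recurrence would then be verified by a case analysis on $n \neq 0$. For $|n|\leq N_k$ at the smallest applicable stage $k$, the embedded $S$-structure directly gives $\mu(T^n A \cap A) > a^2$. For $n$ close to a rigidity time $h_k$, the intersection approaches $\mu(A)=a$, which exceeds $a^2$ since $a<1$. Remaining intermediate $n$ are absorbed into the controlled range by choosing $N_k$ large relative to $h_{k-1}$.

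The main obstacle lies precisely in this intermediate regime. Making the strict over-recurrence inherited from $S$ survive the stacking operations that create rigidity, without allowing the margin $\gamma_n$ to be erased for moderate $n$, requires careful coordination between $N_k$, $h_k$, and the spacer sizes so that at each stage the distortion of $\mu(T^n A \cap A)$ relative to $\mu(S^n A_0 \cap A_0)$ is strictly less than $\tfrac{1}{2}\gamma_n$ uniformly over $|n|\leq N_k$. This quantitative interlacing is the delicate part of the argument; once arranged, rigidity, weak mixing, and strict over-recurrence of $A$ all follow from the construction.
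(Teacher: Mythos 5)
There is a genuine gap, and it sits exactly where you locate the ``delicate part'': the intermediate lags cannot be handled by the mechanism you propose. Your plan keeps the target set essentially fixed (a copy of a single strictly over-recurrent set $A_0$ for the mixing $S$) and asks the rigid limit $T$ to reproduce the margins $\gamma_n=\mu(S^nA_0\cap A_0)-\mu(A_0)^2$ accurately enough that they survive. But once you cut the stage-$k$ tower of height $h_k$ into columns and stack them to create the rigidity time $h_k$, the stage-$(k+1)$ tower is (up to a small spacer set) a periodic repetition of the stage-$k$ pattern. For lags $n$ comparable to $h_k$, say $n\approx h_k/2$, the quantity $\mu(T^nA\cap A)$ is then governed by the \emph{cyclic} autocorrelation of the stage-$k$ pattern: a wrap-around term of size on the order of $n/h_k$ (order one), comparing the top $n$ levels of the pattern against its bottom $n$ levels, appears, and this term is not controlled by $\mu(S^nA_0\cap A_0)$ in any way. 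Since $S$ is mixing, the margin $\gamma_n$ you must beat at such lags tends to $0$, while this structural distortion does not; so the requirement ``distortion $<\gamma_n/2$ uniformly over $|n|\leq N_{k+1}$'' cannot be arranged by tuning $N_k$, $h_k$ and spacer sizes. Your case analysis is also circular on this point: the lags near $h_k/2$ are deferred to the ``controlled range'' of stage $k+1$, but stage $k+1$ is precisely where rigidity at time $h_k$ forces near-periodicity and prevents the tower from mimicking the mixing statistics of $S$ at those lags. Rigidity only rescues lags within a bounded distance of multiples of $h_k$; it says nothing about the middle of the period.

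The paper's proof avoids this by not fixing the set in advance. The set is an infinite union $A=\bigcup_i A_i$ built adaptively alongside the transformation: at each stage a new piece $A_{k+1}$ is chosen (as in Theorem \ref{mix-over-rec}) to consist of long orbit segments, hence nearly invariant under $T^n$ for all $|n|<N_{k+1}$, and a rigid time is only inserted after the current mixing stage has acted long enough that its error is smaller than the measure of the next piece. The strict inequality $\mu(T^nA\cap A)>\mu(A)^2$ at intermediate lags then comes from an \emph{additive} surplus, $\mu(T^nA\cap A)\gtrsim(1-\epsilon_k)\bigl(\mu(C_k)^2+\mu(\bigcup_{i\geq k+2}A_i)\bigr)$, where the already-built part $C_k$ alone contributes less than $\mu(A)^2$ and the nearly invariant tail supplies the excess. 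Your proposal has no analogue of this tail mechanism, and without it there is no reason a fixed set remains strictly over-recurrent for a rigid transformation at lags commensurate with the tower heights; so as written the argument does not go through.
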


\noindent {\bf Proof:} 
Let $S_1$ be a rank-one mixing transformation such as Ornstein's mixing rank-one, 
or the (Adams-Smorodinsky) staircase transformation.  
By Theorem \ref{mix-over-rec}, there is an over-recurrent set $A_1$ of arbitrarily 
small measure. 
By the technique used in Proposition \ref{erg-over-rec}, there exists a tower 
of height $h$ such that the set $A$ is ${\epsilon}/{4}$-over-recurrent, 
even if we modify $S$ to be discrete spectrum from this point on. 
Similarly, we can cut this tower into $r_1$ subcolumns of equal width, 
and stack to produce a rigid time (as $r_n \to \infty$).  
Resume the definition of the mixing transformation $S_2$ similar to $S_1$.  
Then define a set $A_2$ as in Theorem \ref{mix-over-rec}, such that 
iterates of $A_2$ overlap itself for a long time (forward and backward in time).  
Since $S_2$ is mixing, it will mix $A_1 \cup A_2$ over time.  Once this happens 
sufficiently well, then introduce another rigid time $r_2$.  It will not 
disturb the near over-recurrence of $A_1 \cup A_2$.  The error in the near over-recurrence 
can be forced to be much smaller than the size of set $A_3$.  The set $A_3$ is 
defined to be nearly fixed for a long period of time compared to the last 
mixing times chosen for $S_2$, as it operates on $A_1 \cup A_2$.  

This is repeated inductively to produce a rigid weak mixing transformation.  
The arguments used in Theorem \ref{mix-over-rec} and Propostion \ref{erg-over-rec} 
can be applied here to show that the set 
$A = \bigcup_{i=1}^{\infty} A_i$ is strictly over-recurrent for the resulting 
transformation $T$.  The transformation $T$ may be defined as 
\[
T = \lim_{n\to \infty} S_n . 
\]
Although, each $S_n$ may be strong mixing, the limiting transformation 
$T$ will be rigid weak mixing, if $r_n \to \infty$. 
$\ \Box$

\section{Slow Strong Mixing Transformations} 
In this section, we prove the following theorem. 
\begin{theorem}
There exists a strong mixing transformation $T$ such that 
for every set $A$ satisfying $0 < \mu (A) < 1$, the following set is infinite: 
\[
\{ n \in \natural : \mu (T^nA\cap A) - \mu (A)^2 > 0 \} . 
\]
\end{theorem}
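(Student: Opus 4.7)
The plan is to realize $T$ as a cutting-and-stacking limit $T=\lim_n S_n$ that alternates \emph{mixing blocks} (of Ornstein/staircase type) with carefully controlled \emph{rigid blocks}, in the spirit of Theorem \ref{mix-over-rec} and Theorem 3.4 but with the rigid blocks confined to a shrinking sub-column so that strong mixing survives. At stage $k$ with current tower of height $h_k$, I first run a mixing block that drives the mixing tolerance on stage-$(k{-}1)$ sets below some $\sigma_k$. I then perform a rigid block on a sub-column $R_k$ of relative measure $\eta_k$, cutting it into $r_k$ equal sub-pieces and restacking with no spacers, producing a distinguished time $t_k$ at which $T^{t_k}$ approximately fixes every set measurable with respect to the sub-column partition of $R_k$, up to an error of order $1/r_k$. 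Parameters are calibrated so that $\sigma_k\ll\eta_k\to 0$, $r_k\to\infty$, and $R_k$ meets every level of the preceding tower uniformly.

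Strong mixing of the limit is checked by splitting, for each $n$ and any $A,B$,
\[
\mu(T^n A\cap B) = \mu(T^n A\cap B\cap R_k) + \mu(T^n A\cap B\cap R_k^c).
\]
The first term is at most $\eta_k\to 0$, and the second is within $\sigma_k$ of $\mu(A)\mu(B)$ by the mixing block at stage $k$. Choosing $\eta_j$ summable forces the aggregate effect of all rigid blocks past stage $k$ to be negligible, so $\mu(T^n A\cap B)\to\mu(A)\mu(B)$ for every $A,B$, following the technique of \cite{Towerplex1}.

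For the over-recurrence witness, fix $A$ with $0<\mu(A)=a<1$. Splitting $T^{t_k}A\cap A$ in the same way,
\begin{align*}
\mu(T^{t_k}A\cap A)
&\geq \mu(A\cap R_k)(1-1/r_k) + \mu(A\cap R_k^c)\,\mu(A) - O(\sigma_k)\\
&= \mu(A)^2 + \mu(A\cap R_k)(1-a-1/r_k) - O(\sigma_k),
\end{align*}
using near-rigidity of $T^{t_k}$ on $R_k$ in the first term and the stage-$k$ mixing estimate on the second. Because $R_k$ is spread uniformly across an ever finer partition, $\mu(A\cap R_k)\gtrsim a\,\eta_k$ for all $k$ beyond some $k_0(A)$, while $\sigma_k\ll\eta_k$ and $1/r_k\to 0$. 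The excess is therefore strictly positive for all $k\geq k_0(A)$, furnishing infinitely many $n=t_k$ with $\mu(T^n A\cap A)>\mu(A)^2$.

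The main obstacle is the tension between strong mixing and the over-recurrence demand. Strong mixing forces the excess at $t_k$ to vanish as $k\to\infty$, so the rigid portion $R_k$ must shrink, while the over-recurrence demand requires the excess to stay strictly positive for \emph{every} set $A$, so $R_k$ must continue to meet every positive-measure set in a quantitative way. The balancing act is to fix $\sigma_k$, $\eta_k$, $r_k$, and the geometric placement of $R_k$ in the stage-$k$ tower so that the rigid excess $\mu(A\cap R_k)(1-a-1/r_k)$ dominates the mixing error $O(\sigma_k)$ uniformly in $A$ past a set-dependent threshold, while the cumulative rigid mass $\sum_k\eta_k$ is small enough for the limit transformation to remain strong mixing; arranging these interacting constraints is exactly what the construction of \cite{Towerplex1} is designed to do.
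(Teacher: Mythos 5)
Your overall architecture (alternate a mixing mechanism with rigid blocks on a shrinking sub-column $R_k$, and witness over-recurrence at the distinguished rigid times $t_k$) is the same as the paper's towerplex construction, but the central step of your over-recurrence estimate has a genuine gap. You assert that for an arbitrary fixed measurable $A$ and all large $k$,
\[
\mu\bigl(T^{t_k}A\cap A\cap R_k^c\bigr)\;\geq\;\mu(A\cap R_k^c)\,\mu(A)-O(\sigma_k),
\]
with $\sigma_k\ll\eta_k$ fixed in advance of $A$. The stage-$k$ mixing block can only deliver such an estimate for sets measurable with respect to the stage-$(k-1)$ partition; for a general $A$ you must first pass to a partition approximation $A_{k-1}$, and the error $\mu(A\triangle A_{k-1})$, though it tends to $0$, does so at a rate you do not control and which need not be $o(\eta_k)$ --- while your rigid surplus is only of size $a(1-a)\eta_k$. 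In effect this step demands a rate of mixing at the times $t_k$ that is uniform over all measurable sets, and no such rate exists; this is exactly Krengel's observation \cite{Kre78}, which the paper itself invokes immediately after Theorem \ref{mix-over-rec}. Your closing remark that the calibration can be made ``uniformly in $A$ past a set-dependent threshold'' does not repair this: $\eta_k$, $\sigma_k$ and the tower partitions are chosen before $A$ is given, so no threshold $k_0(A)$ forces the mixing defect of $A$ (or its partition-approximation error) to be small relative to $\eta_k$. The summability of $\eta_k$ is not the issue; the unjustified $O(\sigma_k)$ bound is. A smaller instance of the same problem appears on the rigid side: near-fixing up to $1/r_k$ is stated for sub-column-measurable sets but then applied to $A\cap R_k$ for arbitrary $A$.

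The idea your proposal is missing is the paper's Lemma \ref{lem1}, which is precisely the device that removes any need for a rate: among any $N=N(\epsilon,\alpha)$ sets of measure $\alpha$, two must intersect in measure greater than $\alpha^2-\epsilon$, with $N$ independent of the transformation and of the sets. In the construction one arranges at stage $n$ a long list $\rho_1,\dots,\rho_N$ of rigidity times of the $R$-component (with differences again good rigidity times, and with the copy of $R$ honestly rigid so that near-return holds for all measurable subsets along its rigidity sequence, not just partition sets), and applies the lemma to the sets $T^{-\rho_i}A$. This yields, for free and for every $A$, a time $m=\rho_k-\rho_j$ at which the global intersection is already at least $\mu(A)^2-\epsilon$, while the rigid component on $X_n$ adds a strictly positive surplus of order $\mu(A\cap X_n)\bigl(1-\mu(A)\bigr)$, with $\epsilon$ taken small relative to $\mu(X_n)$. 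Replacing your uniform-mixing-error step by this pigeonhole argument (or some substitute that likewise requires no uniform control over arbitrary sets) is necessary for the proof to go through; as written, the displayed inequality above cannot be established and the argument fails.
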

We use a technique from \cite{Towerplex1} to construct our example.  
In \cite{Towerplex1}, a method is given for combining two transformations 
to produce a third "multiplexed" transformation. 
In that paper, the two input transformations are 
a rigid ergodic transformation and a weak mixing transformation.  
The output transformation is a rigid weak mixing transformation.  
In this case, our input transformations are a strong mixing 
transformation and a rigid transformation.  The output is a strong mixing 
transformation.  

We use the following standard result from measure theory. 
\begin{lemma} 
\label{lem1}
Let $(X,\mu)$ be a probability space.  
Given $\epsilon > 0$ and $0<\alpha \leq 1$, there exists $N$ such that 
for any measurable sets $A_1, A_2, \ldots , A_N$ satisfying 
$\mu (A_i) = \alpha$ for $1\leq i\leq N$, there exist 
$1\leq j < k \leq N$ such that 
\[
\mu (A_j \cap A_k) > \alpha^2 - \epsilon . 
\]
\end{lemma}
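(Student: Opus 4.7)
The plan is to apply a second-moment (variance) argument to the indicator functions of the sets $A_i$, then use a pigeonhole comparison between the sum of pairwise intersections and the number of pairs.

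First, I would introduce $f = \sum_{i=1}^{N} \mathbf{1}_{A_i}$ on $(X, \mu)$. Since $\mu$ is a probability measure, $\int f \, d\mu = N\alpha$, and expanding the square gives
\[
\int f^2 \, d\mu = \sum_{i=1}^{N} \mu(A_i) + 2\sum_{1 \leq j < k \leq N} \mu(A_j \cap A_k) = N\alpha + 2\sum_{j<k} \mu(A_j \cap A_k).
\]
By the Cauchy--Schwarz inequality applied to $f$ and $\mathbf{1}_X$ (equivalently, by non-negativity of the variance), we have $\int f^2 \, d\mu \geq \left( \int f \, d\mu \right)^2 = N^2 \alpha^2$.

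Combining these yields $\sum_{j<k} \mu(A_j \cap A_k) \geq \tfrac{1}{2}(N^2\alpha^2 - N\alpha)$. Dividing by the number of pairs $\binom{N}{2} = N(N-1)/2$, the average intersection satisfies
\[
\frac{1}{\binom{N}{2}} \sum_{j<k} \mu(A_j \cap A_k) \geq \frac{N\alpha^2 - \alpha}{N - 1} = \alpha^2 - \frac{\alpha(1-\alpha)}{N-1}.
\]
Since the maximum of a finite collection is at least its average, some pair $(j,k)$ must satisfy $\mu(A_j \cap A_k) \geq \alpha^2 - \tfrac{\alpha(1-\alpha)}{N-1}$.

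To finish, I would simply choose $N > 1 + \alpha(1-\alpha)/\epsilon$ (taking any $N\geq 2$ when $\alpha = 1$), which forces the error term $\alpha(1-\alpha)/(N-1)$ to be strictly less than $\epsilon$ and hence produces the desired pair with $\mu(A_j \cap A_k) > \alpha^2 - \epsilon$. There is no real obstacle here beyond noticing that the Cauchy--Schwarz lower bound on $\int f^2$ is exactly the right quantitative ingredient; the content of the lemma is just that the average pairwise intersection of $N$ sets of common measure $\alpha$ cannot stay bounded away from $\alpha^2$ from below as $N \to \infty$.
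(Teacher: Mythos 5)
Your argument is correct and is essentially the paper's own proof: both rest on the second-moment bound $\int (\sum_i \mathbf{1}_{A_i})^2\,d\mu \geq (N\alpha)^2$ followed by averaging over pairs, with your division by $\binom{N}{2}$ versus the paper's division by $N^2$ being only a bookkeeping difference. Your explicit choice $N > 1 + \alpha(1-\alpha)/\epsilon$ just makes quantitative the final step the paper leaves implicit.
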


\noindent {\bf Proof:} 
\begin{eqnarray}
\int ( \sum_{i=1}^{N} I_{A_i} )^2 d\mu 
&=& \sum_{i \neq j} \mu (A_i \cap A_j) + \sum_{i=1}^{N} \mu (A_i) \\ 
&\geq& ( \sum_{i=1}^{N} \mu (A_i) )^2 = N^2 \alpha^2 
\end{eqnarray}
Therefore, 
\begin{eqnarray}
\frac{1}{N^2} \sum_{i \neq j} \mu (A_i \cap A_j) &\geq& 
\alpha^2 - \frac{\alpha}{N} 
\end{eqnarray}
and we have our result. 
$\ \Box$

\subsection{Mixing Counterexample}
The towerplex method was first defined in section 2 of \cite{Towerplex1}.  
The roles of the input transformations are different.  
In this case, we use $S$ to represent the first input transformation 
which will be a strongly mixing transformation.  
The second input transformation will be a rigid transformation denoted by $R$.  
Thus, a sequence of transformations $S_n:Y_n \to Y_n$ will be defined 
such that $S_n$ is isomorphic to $S$, and another sequence 
$R_n:X_n \to X_n$ such that $R_n$ is isomorphic to $R$.  
For each $n\in \natural$, $X_n \cup Y_n = X$ and define 
\begin{eqnarray*} 
T_{n}(x)= 
\left\{\begin{array}{ll}
R_{n}(x) & \mbox{if $x\in X_{n}$} \\ 
S_{n}(x) & \mbox{if $x\in Y_{n}$} . 
\end{array}
\right.
\end{eqnarray*}
Then the output transformation is defined by $T(x) = \lim_{n\to \infty} T_n(x)$ for $x \in X$. 

Two main parameters are used to control the properties of $T$:  
\begin{eqnarray}
s_n = \frac{1}{2(n+2)} &\mbox{and}& r_n = \frac{1}{2} . 
\end{eqnarray} 
The parameter $s_n$ represents the proportion of mass that transfers 
from $Y_n$ to $X_n$ at each stage.  Similarly, $r_n$ represents the proportion 
of mass that transfers from $X_n$ to $Y_n$ at each stage.  
These settings cause $\lim_{n\to \infty} \mu (Y_n) = 1$ and 
consequently $\lim_{n\to \infty} \mu (X_n) = 0$.  
Note, the fact that $S_n$ is mixing is not sufficient to prove that 
$T$ is mixing.  On the other hand, the fact that $T_n$ is not ergodic 
does not prevent $T$ from being ergodic.  We are more careful 
about defining $S_{n+1}$ based on $S_n$ and use 
a property called "isomorphism chain consistency" 
to show that $T$ is strongly mixing. 

This provides a general technique for constructing a slow strong mixing 
transformation from an arbitrary strong mixing transformation and 
an arbitrary rigid transformation. 
As in \cite{Towerplex1}, we can have for each $n \in \natural$, 
${1} / {(n+2)} < \mu (X_n) < {1} / {n}$ and 
${(n+1)} / {(n+2)} > \mu (Y_n) > {(n-1)} / {n}$.  
Also, since the measure of $X_n$ goes to zero slow enough, 
and the $X_n$ are approximately independent, then 
$X_n$ will mix with any measurable set.  
Also, $R_n$ rigid on $X_n$ will cause $T_n$ to be 
approximately ${1} / {(n+1)}$ rigid on $X$.  
In this way, we can slow the rate of mixing, because $T$ 
will resemble a rigid transformation for arbitrarily long 
times on $X_n$.  

\subsection{Slow mixing from dissipating rigidity}
Lemma \ref{lem1} will inform us on how long $S_n$ should 
run before phasing in $S_{n+1}$ to guarantee the intersection 
\begin{eqnarray}
\label{eqn1}
\mu (S_n^i A \cap A) > ( 1 - \frac{\delta}{n} ) \mu (A)^2 
\end{eqnarray}
for some $i$ and $\delta$.  Let $N$ be large enough to guarantee 
(\ref{eqn1}) holds for some $i$ from a subset of at least $N$ iterates. 
Let $\rho_k$ be a rigidity sequence for $R_n$.  
Choose $\rho_1, \rho_2, \ldots , \rho_N$ such that 
\begin{eqnarray}
\label{eqn2}
\mu (R_n^{\rho_i} A \cap A) > (1 - \frac{\delta}{n}) \mu (A) 
\end{eqnarray}
for $A \in P_n$.  
Using a similar approximation as in \cite{Towerplex1}, 
then a rigidity condition like (\ref{eqn2}) extends to all 
measurable sets $A$ such that $0 < \mu (A) < 1$.  
Thus, (\ref{eqn1}) and (\ref{eqn2}) together can be used to show 
that the following set is infinite: 
\[
\{ n\in \natural : \mu (T^i A\cap A) - \mu (A)^2 > 0 \} . 
\]

\section{Mixing Towerplex Details}
Partition $X$ into two equal 
sets $X_1$ and $Y_1$ (i.e. $\mu (X_1)=\mu (Y_1)=1/2$). 
Initialize $R_1$ isomorphic to $R$ and $S_1$ isomorphic to $S$ to operate on $X_1$ and $Y_1$, respectively. Define $T_1(x)=R_1(x)$ for $x\in X_1$ and $T_1(x)=S_1(x)$ for $x\in Y_1$.  Produce Rohklin towers of height $h_1$ with residual less than $\epsilon_1/2$ for each of $R_1$ and $S_1$. In particular, let $I_1, J_1$ be the base of the $R_1$-tower and $S_1$-tower such that $\mu (\bigcup_{i=0}^{h_1-1}R_1^iI_1)>1/2(1-\epsilon_1)$ and 
$\mu (\bigcup_{i=0}^{h_1-1}S_1^iJ_1)>1/2(1-\epsilon_1)$.  
Let 
$X_1^*=X_1\setminus \bigcup_{i=0}^{h_1-1}R_1^i(I_1)$ 
and 
$Y_1^*=Y_1\setminus \bigcup_{i=0}^{h_1-1}S_1^i(J_1)$ be the 
residuals for the $R_1$ and $S_1$ towers, respectively. 
Choose $I^{\prime}_1\subset I_1$ and $J^{\prime}_1\subset J_1$ such that 
$$
\mu (I^{\prime}_1)=r_1\mu (I_1)\mbox{ and }\mu (J^{\prime}_1)=s_1\mu (J_1).
$$
Set $X_2=X_1\setminus [\bigcup_{i=0}^{h_1-1}R_1^i(I_1^{\prime})]\cup [\bigcup_{i=0}^{h_1-1}S_1^i(J_1^{\prime})]$ and 
$Y_2=Y_1\setminus [\bigcup_{i=0}^{h_1-1}S_1^i(J_1^{\prime})]\cup [\bigcup_{i=0}^{h_1-1}R_1^i(I_1^{\prime})]$. 
We will define second stage transformations $R_2:X_2\to X_2$ and $S_2:Y_2\to Y_2$. 
First, it may be necessary to add or subtract measure from
 the residuals so that $X_2$ is scaled properly to define $R_2$, and 
 $Y_2$ is scaled properly to define $S_2$. 

\subsection{Tower Rescaling}
In the case where $\mu (I_1^{\prime})\neq \mu (J_1^{\prime})$, we give 
a procedure for transferring measure between the towers and the 
residuals. This is done in order to consistently define $R_2$ and $S_2$ 
on the new inflated or deflated towers. 
Let $a=\mu (\bigcup_{i=0}^{h_1-1}R_1^iI_1)$ and 
$b=h_1(\mu (J^{\prime}_1)-\mu (I^{\prime}_1))$. 
Let $c$ be the scaling factor and $d$ the amount of measure 
transferred between $\bigcup_{i=0}^{h_1-1}S_1^i(J^{\prime}_1)$ 
and $X_1^*$. Thus, $a+b-d=ca$ and $1/2-a+d=c(1/2 - a)$. The goal is 
to solve two unknowns $d$ and $c$ in terms of the other values. 
Hence, $d=(1-2a)b$ and $c=1+2b$. 

\subsubsection{$R$ Rescaling}
If $d>0$, define $I_1^*\subset J^{\prime}_1$ such that 
$\mu (I_1^*)=d/h_1$. Let 
$X_1^{\prime}=X_1^*\cup (\bigcup_{i=0}^{h_1-1}R_1^i(I_1^*))$. 
If $d=0$, set $X_1^{\prime}=X_1^*$.  
If $d<0$, transfer measure from $X_1^*$ to the tower.  
Choose disjoint sets $I_1^*(0),I_1^*(1),\ldots ,I_1^*(h_1-1)$ 
contained in $X_1^*$ such that $\mu (I_1^*(i))=d/h_1$. 
Denote $I_1^*=I_1^*(0)$. 
Begin by defining $\mu$ measure preserving map $\alpha_1$ such 
that $I_1^*(i+1)=\alpha_1 (I_1^*(i))$ for $i=0,1,\ldots ,h_1-2$. 
In this case, let 
$X_1^{\prime}=X_1^*\setminus [\bigcup_{i=0}^{h_1-1}I_1^*(i)]$. 

\subsubsection{$S$ Rescaling}
The direction mass is transferred depends on the sign of $b$ above. 
If $d>0$, then $\mu (J^{\prime}_1) > \mu (I^{\prime}_1)$ and 
mass is transferred from the residual $Y_1^*$ to the 
$S_1$-tower. 
Choose disjoint sets $J_1^*(0),J_1^*(1),\ldots ,J_1^*(h_1-1)$ 
contained in $Y_1^*$ such that $\mu (J_1^*(i))=d/h_1$. 
Denote $J_1^*=J_1^*(0)$. 
Begin by defining $\mu$ measure preserving map $\beta_1$ such 
that $J_1^*(i+1)=\beta_1 (J_1^*(i))$ for $i=0,1,\ldots ,h_1-2$. 
In this case, let 
$Y_1^{\prime}=Y_1^*\setminus [\bigcup_{i=0}^{h_1-1}J_1^*(i)]$. 
If $d=0$, set $Y_1^{\prime}=Y_1^*$. 
If $d<0$, transfer measure from the $S_1$-tower to the residual $Y_1^*$. 
Define $J_1^*\subset J_1\setminus J^{\prime}_1$ such that 
$\mu (J_1^*)=d/h_1$. Let 
$Y_1^{\prime}=Y_1^*\cup (\bigcup_{i=0}^{h_1-1}S_1^i(J_1^*))$. 


Note, if $d\neq 0$, 
then both $\epsilon_1$ and $\mu (X_1^*)$ may be chosen small enough 
(relative to $r_1$) to ensure the following solutions lead 
to well-defined sets and mappings. For subsequent stages, assume 
$\epsilon_n$ is chosen small enough to force well-defined 
rescaling parameters, transfer sets and mappings $R_n$, $S_n$. 

\subsection{Stage 2 Construction}
We have specified three cases: $d>0$, $d=0$ and $d<0$. The case $d=0$, can be handled along 
with the case $d>0$. This gives two essential cases. Note the case $d<0$ is analogous 
to the case $d>0$, with the roles of $R_1$ and $S_1$ reversed. However, 
due to a key distinction in the handling of the $R$-rescaling and the $S$-rescaling, 
it is important to clearly define $R_2$ and $S_2$ in both cases. 

\begin{case}[$d\geq 0$]
Define $\tau_1:X_1^{\prime}\to X_1^*$ as a measure preserving map between 
normalized spaces 
$(X_1^{\prime},\mathbb{B} \cap X_1^{\prime},\frac{\mu}{\mu (X_1^{\prime})})$ and 
$(X_1^*,\mathbb{B} \cap X_1^*,\frac{\mu}{\mu (X_1^*)})$. 
Extend $\tau_1$ to the new tower base, 
$$\tau_1:[I_1\setminus I_1^{\prime}]\cup [J^{\prime}_1\setminus I_1^*]\to I_1$$
such that $\tau_1$ preserves normalized measure between 
$$\frac{\mu}{\mu ([I_1\setminus I_1^{\prime}]\cup [J^{\prime}_1\setminus I_1^*])}\mbox{ and }\frac{\mu}{\mu (I_1)}.$$
Define $\tau_1$ on the remainder of the tower consistently 
such that 
\begin{eqnarray*} 
\tau_1(x)= 
\left\{\begin{array}{ll}
R_1^{i}\circ \tau_1 \circ R_1^{-i}(x) & \mbox{if $x\in R_1^i(I_1\setminus I_1^{\prime})$ for $0\leq i<h_1$} \\ 
R_1^{i}\circ \tau_1 \circ S_1^{-i}(x) & \mbox{if $x\in S_1^{i}(J^{\prime}_1\setminus I_1^*)$ for $0\leq i<h_1$}
\end{array}
\right.
\end{eqnarray*}
Define 
$R_2:X_2\to X_2$ as $R_2=\tau_1^{-1}\circ R_1\circ \tau_1$. Note 
\begin{eqnarray*} 
R_2(x)= 
\left\{\begin{array}{ll}
S_1(x) & \mbox{if $x\in S_1^{i}(J^{\prime}_1\setminus I_1^*)$ for $0\leq i<h_1-1$} \\ 
R_1(x) & \mbox{if $x\in R_1^{i}(I_1\setminus I^{\prime}_1)$ for $0\leq i<h_1-1$} 
\end{array}
\right.
\end{eqnarray*}
Clearly, $R_2$ is isomorphic to $R_1$ and $R$. 

Define $\psi_1:Y_1^{\prime}\to Y_1^*$ as a measure preserving map between 
normalized spaces 
$(Y_1^{\prime},\mathbb{B} \cap Y_1^{\prime},\frac{\mu}{\mu (Y_1^{\prime})})$ and 
$(Y_1^*,\mathbb{B} \cap Y_1^*,\frac{\mu}{\mu (Y_1^*)})$. 
Extend $\psi_1$ to the new tower base, 
$$\psi_1:[J_1\setminus J_1^{\prime}]\cup J_1^*\cup I^{\prime}_1\to J_1$$
such that $\psi_1$ preserves normalized measure between 
$$\frac{\mu}{\mu ([J_1\setminus J_1^{\prime}]\cup J_1^*\cup I^{\prime}_1)}\mbox{ and }\frac{\mu}{\mu (J_1)}.$$
Define $\psi_1$ on the remainder of the tower consistently 
such that 
\begin{eqnarray*} 
\psi_1(x)= 
\left\{\begin{array}{ll}
S_1^{i}\circ \psi_1 \circ S_1^{-i}(x) & \mbox{if $x\in S_1^i(J_1\setminus J_1^{\prime})$ for $0\leq i<h_1$} \\ 
S_1^{i}\circ \psi_1 \circ R_1^{-i}(x) & \mbox{if $x\in R_1^{i}(I^{\prime}_1)$ for $0\leq i<h_1$} \\ 
\beta_1^i\circ \psi_1 \circ \beta_1^{-i}(x) & \mbox{if $x\in J_1^*(i)$ for $0\leq i<h_1$}
\end{array}
\right.
\end{eqnarray*}
In this case, define $S_2:Y_2\to Y_2$ such that 
$S_2=\psi_1^{-1}\circ S_1\circ \psi_1$. Note 
\begin{eqnarray*} 
S_2(x)= 
\left\{\begin{array}{ll}
R_1(x) & \mbox{if $x\in R_1^{i}I^{\prime}_1$ for $0\leq i<h_1-1$} \\ 
S_1(x) & \mbox{if $x\in S_1^{i}(J_1\setminus J^{\prime}_1)$ for $0\leq i<h_1-1$} \\
\beta_1(x) & \mbox{if $x\in J_1^*(i)$ for $0\leq i<h_1-1$} \\ 
\psi_1^{-1}\circ S_1\circ \psi_1(x) & \mbox{if $x\in Y_1^{\prime}\cup S_1^{h_1-1}(J_1\setminus J_1^{\prime})\cup R_1^{h_1-1}I^{\prime}_1\cup \beta_1^{h_1-1}J_1^*$} 
\end{array}
\right.
\end{eqnarray*}
and $S_2$ is isomorphic to $S_1$ and $S$. 
\end{case}

\begin{case}[$d<0$]  
Define $\tau_1:X_1^{\prime}\to X_1^*$ as a measure preserving map between 
normalized spaces 
$(X_1^{\prime},\mathbb{B} \cap X_1^{\prime},\frac{\mu}{\mu (X_1^{\prime})})$ 
and $(X_1^*,\mathbb{B} \cap X_1^*,\frac{\mu}{\mu (X_1^*)})$. 
Extend $\tau_1$ to the new tower base, 
$$\tau_1:[I_1\setminus I_1^{\prime}]\cup I_1^*\cup J^{\prime}_1\to I_1$$
such that $\tau_1$ preserves normalized measure between 
$$\frac{\mu}{\mu ([I_1\setminus I_1^{\prime}]\cup I_1^*\cup J^{\prime}_1)}\mbox{ and }\frac{\mu}{\mu (I_1)}.$$
Define $\tau_1$ on the remainder of the tower consistently 
such that 
\begin{eqnarray*} 
\tau_1(x)= 
\left\{\begin{array}{ll}
R_1^{i}\circ \tau_1 \circ R_1^{-i}(x) & \mbox{if $x\in R_1^i(I_1\setminus I_1^{\prime})$ for $0\leq i<h_1$} \\ 
R_1^{i}\circ \tau_1 \circ S_1^{-i}(x) & \mbox{if $x\in S_1^{i}(J^{\prime}_1)$ for $0\leq i<h_1$} \\ 
\alpha_1^i\circ \tau_1 \circ \alpha_1^{-i}(x) & \mbox{if $x\in I_1^*(i)$ for $0\leq i<h_1$}
\end{array}
\right.
\end{eqnarray*}
In this case, define $R_2:X_2\to X_2$ such that 
\begin{eqnarray*} 
R_2(x)= 
\left\{\begin{array}{ll}
S_1(x) & \mbox{if $x\in S_1^{i}J^{\prime}_1$ for $0\leq i<h_1-1$} \\ 
R_1(x) & \mbox{if $x\in R_1^{i}(I_1\setminus I^{\prime}_1)$ for $0\leq i<h_1-1$} \\
\alpha_1(x) & \mbox{if $x\in I_1^*(i)$ for $0\leq i<h_1-1$} \\ 
\tau_1^{-1}\circ R_1\circ \tau_1(x) & \mbox{if $x\in X_1^{\prime}\cup R_1^{h_1-1}(I_1\setminus I_1^{\prime})\cup S_1^{h_1-1}J^{\prime}_1\cup \alpha_1^{h_1-1}I_1^*$} 
\end{array}
\right.
\end{eqnarray*}
Clearly, $R_2$ is isomorphic to $R_1$ and $R$. 

Define $\psi_1:Y_1^{\prime}\to Y_1^*$ as a measure preserving map between 
normalized spaces 
$(Y_1^{\prime},\mathbb{B} \cap Y_1^{\prime},\frac{\mu}{\mu (Y_1^{\prime})})$ 
and $(Y_1^*,\mathbb{B} \cap Y_1^*,\frac{\mu}{\mu (Y_1^*)})$. 
Extend $\psi_1$ to the new tower base, 
$$\psi_1:[J_1\setminus (J_1^{\prime}\cup J_1^*)]\cup I^{\prime}_1\to J_1$$
such that $\psi_1$ preserves normalized measure between 
$$\frac{\mu}{\mu ([J_1\setminus (J_1^{\prime}\cup J_1^*)]\cup I^{\prime}_1)}\mbox{ and }\frac{\mu}{\mu (J_1)}.$$
Define $\psi_1$ on the remainder of the tower consistently 
such that 
\begin{eqnarray*} 
\psi_1(x)= 
\left\{\begin{array}{ll}
S_1^{i}\circ \psi_1 \circ S_1^{-i}(x) & \mbox{if $x\in S_1^i(J_1\setminus [J_1^{\prime}\cup J_1^*])$ for $0\leq i<h_1$} \\ 
S_1^{i}\circ \psi_1 \circ R_1^{-i}(x) & \mbox{if $x\in R_1^{i}(I^{\prime}_1)$ for $0\leq i<h_1$}
\end{array}
\right.
\end{eqnarray*}
Define $S_2:Y_2\to Y_2$ such that $S_2=\psi_1^{-1}\circ S_1\circ \psi_1$. Note 
\begin{eqnarray*} 
S_2(x)= 
\left\{\begin{array}{ll}
R_1(x) & \mbox{if $x\in R_1^{i}(I^{\prime}_1)$ for $0\leq i<h_1-1$} \\ 
S_1(x) & \mbox{if $x\in S_1^{i}(J_1\setminus [J^{\prime}_1\cup J_1^*])$ for $0\leq i<h_1-1$}
\end{array}
\right.
\end{eqnarray*}
Transformation $S_2$ is isomorphic to $S_1$ and $S$. 
\end{case}
Define $T_2$ as 
\begin{eqnarray*} 
T_2(x)= 
\left\{\begin{array}{ll}
R_2(x) & \mbox{if $x\in X_2$} \\ 
S_2(x) & \mbox{if $x\in Y_2$}
\end{array}
\right.
\end{eqnarray*}
Clearly, neither $T_1$ nor $T_2$ are ergodic. For $T_1$, $X_1$ and $Y_1$ 
are ergodic components, and $X_2$, $Y_2$ are ergodic components for 
$T_2$. 
\subsection{General Multiplexing Operation}
For $n\geq 1$, suppose that $R_n$ and $S_n$ have been defined on $X_n$ and $Y_n$ respectively. Construct Rohklin towers of height $h_n$ for each $R_n$ and $S_n$, and such that $I_n$ is the base of the $R_n$ tower, $J_n$ is the base of the $S_n$ tower, and $\mu (\bigcup_{i=0}^{h_n-1}R_n^iI_n) + \mu (\bigcup_{i=0}^{h_n-1}S_n^iJ_n)>1-\epsilon_n$. Let $I^{\prime}_n \subset I_n$ be such that $\mu (I^{\prime}_n) = r_n \mu (I_n)$. Similarly, suppose $J^{\prime}_n \subset J_n$ such that $\mu (J^{\prime}_n) = s_n \mu (J_n)$. 

We define $R_{n+1}$ and $S_{n+1}$ by switching the subcolumns 
$$\{I^{\prime}_n, R_n(I^{\prime}_n),R_n^2(I^{\prime}_n),\ldots ,R_n^{h_n-1}(I^{\prime}_n)\}$$ and 
$$\{J^{\prime}_n, S_n(J^{\prime}_n),S_n^2(J^{\prime}_n),\ldots ,S_n^{h_n-1}(J^{\prime}_n)\}.$$ Let 
\begin{eqnarray*}
X_{n+1}&=&[\bigcup_{i=0}^{h_n-1}R_n^i(I_n\setminus I_n^{\prime})]
\cup [\bigcup_{i=0}^{h_n-1}S_n^iJ_n^{\prime}] 
\cup [X_n\setminus \bigcup_{i=0}^{h_n-1}R_n^iI_n] \\ 
Y_{n+1}&=&[\bigcup_{i=0}^{h_n-1}S_n^i(J_n\setminus J_n^{\prime})]
\cup [\bigcup_{i=0}^{h_n-1}R_n^iI_n^{\prime}] 
\cup [Y_n\setminus \bigcup_{i=0}^{h_n-1}S_n^iJ_n].
\end{eqnarray*}
As in the initial case, it may be necessary to transfer measure 
between each column and its respective residual. 
We can follow the same algorithm as above, and define 
maps $\tau_n, \alpha_n, \psi_n$ and $\beta_n$. 
Thus, we get the following definitions: 
\begin{case}[$d\geq 0$]
\begin{eqnarray*}
\tau_n(x)=& 
\left
\{\begin{array}{ll}
R_n^{i}\circ \tau_n \circ R_n^{-i}(x) & \mbox{if $x\in R_n^i(I_n\setminus I_n^{\prime})$ for $0\leq i<h_n$} \\ 
R_n^{i}\circ \tau_n \circ S_n^{-i}(x) & \mbox{if $x\in S_n^{i}(J^{\prime}_n\setminus I_1^*)$ for $0\leq i<h_n$}
\end{array}
\right.
\end{eqnarray*}
\begin{eqnarray*} 
R_{n+1}(x)=& 
\left\{\begin{array}{ll}
S_n(x) & \mbox{if $x\in S_n^{i}(J^{\prime}_n\setminus I_n^*)$ for $0\leq i<h_n-1$} \\ 
R_n(x) & \mbox{if $x\in R_n^{i}(I_n\setminus I^{\prime}_n)$ for $0\leq i<h_n-1$} \\
\tau_n^{-1}\circ R_n\circ \tau_n(x) & \mbox{if $x\in X_n^{\prime}\cup R_n^{h_n-1}(I_n\setminus I_n^{\prime})\cup S_n^{h_n-1}(J^{\prime}_n\setminus I_n^*)$} 
\end{array}
\right.
\end{eqnarray*}
and $R_{n+1}=\tau_n^{-1}\circ R_n\circ \tau_n$.
\begin{eqnarray*} 
\psi_n(x)=& 
\left\{\begin{array}{ll}
S_n^{i}\circ \psi_n \circ S_n^{-i}(x) & \mbox{if $x\in S_n^i(J_n\setminus J_n^{\prime})$ for $0\leq i<h_n$} \\ 
S_n^{i}\circ \psi_n \circ R_n^{-i}(x) & \mbox{if $x\in R_n^{i}(I^{\prime}_n)$ for $0\leq i<h_n$} \\ 
\beta_n^i\circ \psi_n \circ \beta_n^{-i}(x) & \mbox{if $x\in J_n^*(i)$ for $0\leq i<h_n$}
\end{array}
\right.
\end{eqnarray*}
\begin{eqnarray*} 
S_{n+1}(x)=& 
\left\{\begin{array}{ll}
R_n(x) & \mbox{if $x\in R_n^{i}I^{\prime}_n$ for $0\leq i<h_n-1$} \\ 
S_n(x) & \mbox{if $x\in S_n^{i}(J_n\setminus J^{\prime}_n)$ for $0\leq i<h_n-1$} \\
\beta_n(x) & \mbox{if $x\in J_n^*(i)$ for $0\leq i<h_n-1$} \\ 
\psi_n^{-1}\circ S_n\circ \psi_n(x) & \mbox{if $x\in Y_n^{\prime}\cup S_n^{h_n-1}(J_n\setminus J_n^{\prime})\cup R_n^{h_n-1}I^{\prime}_n\cup \beta_n^{h_n-1}J_n^*$} 
\end{array}
\right.
\end{eqnarray*}
and $S_{n+1}=\psi_n^{-1}\circ S_n\circ \psi_n$.
\end{case}

\begin{case}[$d<0$]
\begin{eqnarray*} 
\tau_n(x)=& 
\left\{\begin{array}{ll}
R_n^{i}\circ \tau_n \circ R_n^{-i}(x) & \mbox{if $x\in R_n^i(I_n\setminus I_n^{\prime})$ for $0\leq i<h_n$} \\ 
R_n^{i}\circ \tau_n \circ S_n^{-i}(x) & \mbox{if $x\in S_n^{i}(J^{\prime}_n)$ for $0\leq i<h_n$} \\ 
\alpha_n^i\circ \tau_n \circ \alpha_n^{-i}(x) & \mbox{if $x\in I_n^*(i)$ for $0\leq i<h_n$}
\end{array}
\right.
\end{eqnarray*}
\begin{eqnarray*} 
R_{n+1}(x)=& 
\left\{\begin{array}{ll}
S_n(x) & \mbox{if $x\in S_n^{i}J^{\prime}_n$ for $0\leq i<h_n-1$} \\ 
R_n(x) & \mbox{if $x\in R_n^{i}(I_n\setminus I^{\prime}_n)$ for $0\leq i<h_n-1$} \\
\alpha_n(x) & \mbox{if $x\in I_n^*(i)$ for $0\leq i<h_n-1$} \\ 
\tau_n^{-1}\circ R_n\circ \tau_n(x) & \mbox{if $x\in X_n^{\prime}\cup R_n^{h_n-1}(I_n\setminus I_n^{\prime})\cup S_n^{h_n-1}J^{\prime}_n\cup \alpha_n^{h_n-1}I_n^*$} 
\end{array}
\right.
\end{eqnarray*}
and $R_{n+1}=\tau_n^{-1}\circ R_n\circ \tau_n$.
\begin{eqnarray*} 
\psi_n(x)=& 
\left\{\begin{array}{ll}
S_n^{i}\circ \psi_n \circ S_n^{-i}(x) & \mbox{if $x\in S_n^i(J_n\setminus [J_n^{\prime}\cup J_n^*])$ for $0\leq i<h_n$} \\ 
S_n^{i}\circ \psi_n \circ R_n^{-i}(x) & \mbox{if $x\in R_n^{i}(I^{\prime}_n)$ for $0\leq i<h_n$}
\end{array}
\right.
\end{eqnarray*}
\begin{eqnarray*} 
S_{n+1}(x)=& 
\left\{\begin{array}{ll}
R_n(x) & \mbox{if $x\in R_n^{i}(I^{\prime}_n)$ for $0\leq i<h_n-1$} \\ 
S_n(x) & \mbox{if $x\in S_n^{i}(J_n\setminus [J^{\prime}_n\cup J_n^*])$ for $0\leq i<h_n-1$} \\
\psi_n^{-1}\circ S_n\circ \psi_n(x) & \mbox{if $x\in Y_n^{\prime}\cup S_n^{h_n-1}(J_n\setminus [J_n^{\prime}\cup J_n^*])\cup R_n^{h_n-1}(I^{\prime}_n)$} 
\end{array}
\right.
\end{eqnarray*}
and $S_{n+1}=\psi_n^{-1}\circ S_n\circ \psi_n$.
\end{case}
\subsection{The Limiting Transformation}
Define the transformation 
$T_{n+1}:X_{n+1}\cup Y_{n+1}\to X_{n+1}\cup Y_{n+1}$ such that 
\begin{eqnarray*} 
T_{n+1}(x)= 
\left\{\begin{array}{ll}
R_{n+1}(x) & \mbox{if $x\in X_{n+1}$} \\ 
S_{n+1}(x) & \mbox{if $x\in Y_{n+1}$} 
\end{array}
\right.
\end{eqnarray*}
The set where $T_{n+1}\neq T_n$ is determined by the top levels 
of the Rokhlin towers, the residuals and the transfer sets. 
Note the transfer set has measure $d$. Since this set is used to adjust 
the size of the residuals between stages, it can be bounded below 
a constant multiple of $\epsilon_n$. Thus, there is a fixed constant $\kappa$, 
independent of $n$, such that 
$T_{n+1}(x)=T_n(x)$ except for $x$ in a set of measure less than 
$\kappa (\epsilon_n + {1}/{h_n})$. Since $\sum_{n=1}^{\infty} (\epsilon_n + {1}/{h_n}) < \infty$, 
$T(x) = \lim_{n\to \infty}T_n(x)$ exists almost everywhere, 
and preserves normalized Lebesgue measure. 
Without loss of generality, we may assume $\kappa$ and $h_n$ 
are chosen such that if 
$$E_n=\{x\in X| T_{n+1}(x)\neq T_n(x)\}$$
then $\mu (E_n) < \kappa \epsilon_n$ for $n\in \natural$. 
In the following section, additional structure and conditions 
are implemented to ensure that 
$T$ inherits properties from $R$ and $S$, and is also ergodic. 

For the remainder of this paper, 
assume the parameters are chosen such that 
\begin{enumerate}
\item $\lim_{n\to \infty}s_n= 0$;
\item $\sum_{n=1}^{\infty}r_n=\sum_{n=1}^{\infty}s_n=\infty$; 
\item $\lim_{n\to \infty}\mu (X_n)=0$;
\item $\sum_{n=1}^{\infty}\epsilon_n<\infty$.
\end{enumerate}

\subsection{Isomorphism Chain Consistency}
Suppose $S$ is a strong mixing transformation on $(Y,\mathcal{B},\mu)$. 
We will use the multiplexing procedure defined in the previous section 
to produce a "slow" mixing transformation $T$.  
Let $\mu_n$ be normalized Lebesgue probability measure on $Y_n$. 
i.e. $\mu_n = {\mu} / {\mu(Y_n)}$. 

For $n\in \natural$, let $P_n$ be a refining sequence of finite partitions 
which generates the sigma algebra. By refining $P_n$ further if necessary, 
assume $X_n,Y_n,X_n^*,Y_n^*\in P_n$. 
Also, assume 
$R_n^i(I^{\prime}_n), R_n^i(I_n\setminus I^{\prime}_n), S_n^i(J^{\prime}_n), 
S_n^i(J_n\setminus J^{\prime}_n)$ are elements of $P_n$ 
for $0\leq i<h_n$. 
Finally, assume for $0\leq i<h_n-1$, if $p\in P_n$ and 
$p\subset R_n^i(I_n)$ then $R_n(p)\in P_n$. 
Likewise, assume for $0\leq i<h_n-1$, if $p\in P_n$ and 
$p\subset S^i(J_n)$ then $S_n(p)\in P_n$. 
Previously, we required that $\psi_n$ map certain finite orbits from 
the $S_n$ and $R_n$ towers to a corresponding orbit in the $S_{n+1}$ tower. 
In this section, further regularity is imposed on $\psi_n$ relative to $P_n$ 
to ensure dynamical properties of $S_n$ are inherited by $S_{n+1}$. 

Let $P_n^{\prime}=
\{p\in P_n| p\subset \bigcup_{i=0}^{h_n-1}S_n^i(J_n\setminus J_n^{\prime})\}$. 
For each of the following three cases, impose the corresponding 
restriction on $\psi_n$:
\begin{enumerate}
\item for $d=0$ and $p\in P_n^{\prime}$, $\psi_n$ is the identity map (i.e. $\psi_n(p)=p$);
\item for $d>0$ and $p\in P_n^{\prime}$, $\psi_n(p)\subset p$;
\item for $d<0$ and $p\in P_n^{\prime}$, $p\subset \psi_n(p)$.
\end{enumerate}
This can be accomplished by uniformly distributing the appropriate mass 
from the sets $S_n^i(J_n^*)$ using $\psi_n$. 
Note that $\psi_n$ either preserves Lebesgue measure in the case $d=0$, 
or $\psi_n$ contracts sets relative to Lebesgue measure in the case $d>0$, 
or it inflates measure in the case $d<0$. In all three cases, 
for $p\in P_n^{\prime}$,
$$
\frac{\mu (p)}{\mu (\psi_n (p))}=\frac{\mu (Y_{n+1})}{\mu (Y_n)}.
$$
It is straightforward to verify for any set $A$ measurable relative to $P_n^{\prime}$, 
$$
\mu (A\triangle \psi_nA)<|\frac{\mu (Y_{n+1})}{\mu (Y_n)} - 1|.
$$
The properties of $\psi_n$ allow approximation of $S_{n+1}$ by $S_n$ indefinitely over time. 
This is needed to establish mixing for the limiting transformation $T$. 

Since each $S_n$ is strongly mixing on $Y_n$, 
then for all $A,B\in P_n^{\prime}$, 
\[
\lim_{i \to \infty} \mu_n(A\cap S_n^iB) = \mu_n(A)\mu_n(B) . 
\]

Prior to establishing strong mixing, we prove 
a lemma which is part of a similar lemma shown in \cite{Towerplex1}. 
For $p\in P_n^{\prime}$,
$$
\frac{\mu (p)}{\mu (\psi_n (p))}=\frac{\mu (Y_{n+1})}{\mu (Y_n)}.
$$
It is straightforward to verify for any set $A$ measurable relative to $P_n^{\prime}$, 
\begin{align*}
\mu (A\triangle \psi_nA) &= \mu (A) - \mu (\psi_n(A)) \\ 
&\leq \mu (\psi_n A) [ \frac{\mu (Y_{n+1})}{\mu (Y_n)} - 1 ] 
= \frac{\mu (\psi_n A)}{\mu (Y_n)} [ \mu (Y_{n+1}) - \mu (Y_n) ] . 
\end{align*}
and for any measurable set $C\subset Y_n$, 
$$
| \mu (\psi_n^{-1}C) - \mu(C) | <  |\frac{\mu (Y_{n+1})}{\mu (Y_n)} - 1|.
$$
These two properties are used in the following lemma to show $S_{n+1}$ inherits dynamical 
properties from $S_n$ indefinitely over time.  
Let $Q_n = \{ \psi_n (p) : p \in P_n^{\prime} \}$.  
\begin{lemma}
\label{rescalinglem}
Suppose $\delta >0$ and $n\in \natural$ is chosen such that 
\begin{eqnarray*}
\epsilon_n + \mu (X_n) < \frac{\delta}{6}.
\end{eqnarray*}
Then for $A,B\in Q_n$ and $i\in \natural$, 
\[
|\mu (S_{n+1}^iA\cap B)-\mu (A)\mu (B) | 
< |\mu (S_{n}^iA\cap B)-\mu (A)\mu (B) | + \delta . 
\]
\end{lemma}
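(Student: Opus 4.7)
The strategy is to exploit the conjugacy $S_{n+1} = \psi_n^{-1} \circ S_n \circ \psi_n$, which iterates to $S_{n+1}^i = \psi_n^{-1} \circ S_n^i \circ \psi_n$ for each $i \in \natural$. By definition of $Q_n$, there exist $p_A, p_B \in P_n^{\prime}$ with $A = \psi_n(p_A)$ and $B = \psi_n(p_B)$. My plan is to show that $\mu(S_{n+1}^i A \cap B)$ differs from $\mu(S_n^i p_A \cap p_B)$ by less than $\delta/2$ and that $\mu(A)\mu(B)$ differs from $\mu(p_A)\mu(p_B)$ by less than $\delta/2$, so that the conclusion follows from the triangle inequality applied to the two distances-from-independence.

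First I would compute
\[
\mu(S_{n+1}^i A \cap B) = \mu\bigl( \psi_n^{-1}(S_n^i(\psi_n A)) \cap \psi_n(p_B) \bigr)
\]
and use the two approximation inequalities stated just before the lemma — namely $|\mu(\psi_n^{-1} C) - \mu(C)| < |\mu(Y_{n+1})/\mu(Y_n) - 1|$ and the symmetric-difference bound $\mu(A \triangle \psi_n A) \leq \mu(\psi_n A)\,[\mu(Y_{n+1})/\mu(Y_n) - 1]$ — to successively replace $\psi_n A$ inside the intersection by $p_A$ and $\psi_n p_B$ by $p_B$, accumulating error controlled at each step by $|\mu(Y_{n+1})/\mu(Y_n) - 1|$. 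The product $\mu(A)\mu(B)$ is linked to $\mu(p_A)\mu(p_B)$ exactly via the identity $\mu(p)/\mu(\psi_n p) = \mu(Y_{n+1})/\mu(Y_n)$, again yielding a difference bounded by a small constant multiple of the same ratio error.

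The main obstacle is the careful accounting of error terms. The key estimate I need is that $|\mu(Y_{n+1})/\mu(Y_n) - 1|$ is itself bounded by a small constant multiple of $\epsilon_n + \mu(X_n)$: the numerator $|\mu(Y_{n+1}) - \mu(Y_n)|$ is at most the mass transferred at stage $n$ (governed by the parameter $d = (1-2a)b$ whose size is controlled by $\epsilon_n$ via the residual measures), while the denominator is bounded below by $\mu(Y_n) \geq 1 - \mu(X_n)$. The hypothesis $\epsilon_n + \mu(X_n) < \delta/6$ is then tuned so that the three or four error contributions produced by the successive replacements sum to at most $\delta$. A secondary technical check is that $\psi_n A$ and $\psi_n p_B$ are measurable with respect to $P_n^{\prime}$ so that the approximation bounds apply; this is precisely ensured by the regularity restrictions on $\psi_n$ imposed at the start of the subsection, which stipulate that $\psi_n(p) \subset p$, $\psi_n(p) \supset p$, or $\psi_n(p) = p$ for every $p \in P_n^{\prime}$ depending on the sign of $d$.
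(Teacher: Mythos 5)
Your core mechanism is the same as the paper's: iterate the conjugacy to get $S_{n+1}^i=\psi_n^{-1}\circ S_n^i\circ\psi_n$ and control the error with the two approximation properties of $\psi_n$ (the symmetric-difference bound and $|\mu(\psi_n^{-1}C)-\mu(C)|<|\mu(Y_{n+1})/\mu(Y_n)-1|$, which the hypothesis $\epsilon_n+\mu(X_n)<\delta/6$ keeps below $\delta/6$). The gap is in the final assembly. The lemma compares $\mu(S_{n+1}^iA\cap B)$ and $\mu(S_n^iA\cap B)$ for the \emph{same} sets $A,B\in Q_n$, with the product $\mu(A)\mu(B)$ appearing identically on both sides. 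Your plan instead compares $\mu(S_{n+1}^iA\cap B)$ with $\mu(S_n^ip_A\cap p_B)$ and $\mu(A)\mu(B)$ with $\mu(p_A)\mu(p_B)$; the triangle inequality then yields
\[
|\mu(S_{n+1}^iA\cap B)-\mu(A)\mu(B)| < |\mu(S_n^ip_A\cap p_B)-\mu(p_A)\mu(p_B)|+\delta,
\]
whose right-hand side refers to the partition elements $p_A,p_B\in P_n^{\prime}$, not to $A,B$ --- a different statement from the lemma. To recover the stated inequality you must convert back, replacing $p_A,p_B$ by $A,B$ in both the intersection term and the product term, which costs roughly another $2\delta/3$ on top of your budgeted $\delta/2+\delta/2$, overshooting $\delta$. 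So as written the argument proves a variant of the lemma, not the lemma itself.

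The repair is small and is exactly the paper's route: leave $\mu(A)\mu(B)$ untouched and substitute in the opposite direction, setting $A^{\prime}=\psi_n^{-1}A=p_A$, $B^{\prime}=\psi_n^{-1}B=p_B$. Replacing $A,B$ by $A^{\prime},B^{\prime}$ inside the $S_{n+1}$-term costs less than $\delta/3$, and then
\[
S_{n+1}^iA^{\prime}\cap B^{\prime}=\psi_n^{-1}S_n^i\psi_nA^{\prime}\cap\psi_n^{-1}B=\psi_n^{-1}\bigl(S_n^iA\cap B\bigr),
\]
so one application of $|\mu(\psi_n^{-1}C)-\mu(C)|<\delta/6$ gives $|\mu(S_{n+1}^iA\cap B)-\mu(S_n^iA\cap B)|<\delta/2$, and the lemma follows from a single triangle inequality against the common $\mu(A)\mu(B)$. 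This also avoids the awkward object $\psi_nA=\psi_n^2p_A$ that your substitution direction produces. One minor attribution issue: $|\mu(Y_{n+1})-\mu(Y_n)|$ equals the imbalance of the switched subcolumns, $h_n|\mu(J_n^{\prime})-\mu(I_n^{\prime})|$, so it is governed by $\mu(X_n)$ and $s_n$ rather than by the tower--residual correction $d$ alone; your ultimate conclusion that the ratio error is controlled by $\epsilon_n+\mu(X_n)$ is nevertheless the estimate the hypothesis is designed to deliver.
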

\begin{proof} 
For $A,B\in Q_n$, let $A^{\prime}= \psi_n^{-1} A$ and 
$B^{\prime}=\psi_n^{-1} B$. 
Thus, 
$\mu(A^{\prime}\triangle A) = \mu(\psi_n^{-1}(A\setminus \psi_nA)) < {\delta} / {6}$ 
and $\mu (B^{\prime}\triangle B)<\frac{\delta}{6}$. 
By applying the triangle inequality several times, we get the following approximation: 
\begin{eqnarray*} 
| \mu (S_{n+1}^iA\cap B) &-& \mu(S_n^i A \cap B) | \\ 
&\leq& | \mu (S_{n+1}^iA^{\prime}\cap B^{\prime}) - \mu(S_n^i A \cap B) | 
+ \frac{\delta}{3} \\ 
&=& \mu (\psi_n^{-1}S_{n}^i\psi_nA^{\prime}\cap B^{\prime}) - \mu(S_n^i A \cap B) | + \frac{\delta}{3} \\ 
&=& \mu (\psi_n^{-1}(S_{n}^i\psi_nA^{\prime} \cap \psi_n B^{\prime})) - \mu(S_n^i A \cap B) | + \frac{\delta}{3} \\ 
&=& | \mu( \psi_n^{-1}(S_n^i A \cap B)) - \mu(S_n^i A \cap B) | + \frac{\delta}{3} \\ 
&<& \frac{\delta}{2} . 
\end{eqnarray*}
Similarly, 
$$
| \mu (S_{n+1}^iF\cap F) - \mu(S_n^i F \cap F) | < \frac{\delta}{2} . 
$$
Therefore, 
$$|\mu (S_{n+1}^iA\cap B)-\mu (A)\mu (B) | < |\mu (S_{n}^iA\cap B)-\mu (A)\mu (B) | + \delta.$$
\end{proof}

\section{Towerplexes with singular spectrum}
If $\Phi$ is the space of ergodic measure preserving transformations on a separable 
probability space, then the tower multiplexing operation defines a mapping 
\[
\mathcal{M}: \Phi \times \Phi \to \Phi .
\]
The mapping also depends on a collection of parameters $\mathcal{P}$. 
Thus, we may write $T = \mathcal{M}(R,S,\mathcal{P})$ to represent the multiplexed 
transformation $T$ produced from transformations $R$ and $S$. 
In \cite{Towerplex1}, the transformation $R$ is ergodic and rigid, and $S$ is weak mixing. 
In particular, $S$ is set to the Chacon3 transformation, and 
the parameters are defined such that $S$ is a "dissipating" component.  
Given $R$ ergodic with rigidity sequence $(\rho_n)_{n=1}^{\infty}$, 
it is shown there exists $\mathcal{P}$ such that 
$T = \mathcal{M}(R,S,\mathcal{P})$ is weak mixing with rigidity sequence $\rho_n$. 

In this paper, we use the tower multiplexing technique to produce a transformation 
$T=\mathcal{M}(S,R,\mathcal{P})$ with continuous singular spectrum. 
Again, the second component transformation will be a dissipating component. 
However, we flip the roles of $R$ and $S$, so $R$ (rigidity) is used in the second component. 
The parameter collection $\mathcal{P}$ includes sequences $r_n$ and $s_n$. As in \cite{Towerplex1}, 
associate $r_n$ with $R$ and $s_n$ with $S$. Associate $X_n$ with the second component 
transformation $R$, and $Y_n$ with $S$. Other parameters included in $\mathcal{P}$ 
are $\epsilon_n$ and $h_n$. 
We have the following main theorem. 
\begin{theorem}
\label{towerplex0}
Let $S$ be an invertible ergodic measure preserving transformation 
with weak limit $S^{m_n} \to S_0$ as $n\to \infty$. 
There exist a rigid weak mixing transformation $R$, and parameter $\mathcal{P}$ such that 
\[
T = \mathcal{M}(S,R,\mathcal{P}) 
\]
is weak mixing with singular spectrum, and $T^{m_n} \to S_0$.  
\end{theorem}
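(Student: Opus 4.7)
The plan is to construct $T=\mathcal{M}(S,R,\mathcal{P})$ using the multiplexing procedure of the previous section. I would take $R$ to be a rigid weak mixing transformation---for example, the one built in \cite{Towerplex1}, or the construction of the earlier theorem in this paper---and choose $\mathcal{P}=(s_n,r_n,\epsilon_n,h_n)$ so that $s_n\to 0$, $\sum s_n=\sum r_n=\infty$, $\mu(X_n)\to 0$, and $\sum \epsilon_n<\infty$, with $R$ on the dissipating component $X_n$ and $S$ on the dominant component $Y_n$. The work is then to schedule the heights $h_n$ so that $T$ inherits the weak limit $T^{m_n}\to S_0$ from $S$, that this weak limit forces singular spectrum, and that $T$ remains weak mixing.

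The inductive step runs as follows. At stage $n$, let $S_n$ act on $Y_n$ long enough that on a refining partition $P_n$ of $Y_n$ the approximation $|\mu(S_n^{m_k} A \cap B)-\mu(S_0 A\cap B)|<2^{-n}$ holds for all $A,B\in P_n$ and all $k\leq n$. By Lemma \ref{rescalinglem} applied iteratively, the quantity $|\mu(S_j^{m_k} A\cap B)-\mu(S_n^{m_k}A\cap B)|$ remains summable in $j\geq n$; since $T=\lim T_n$ differs from $T_n$ only on a set of measure at most $\sum_{j\geq n}\kappa\epsilon_j$, one concludes $\mu(T^{m_k}A\cap B)\to\mu(S_0 A\cap B)$ first for $A,B\in\bigcup_n P_n$ and then, by approximation, for every pair of measurable sets. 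This establishes $T^{m_n}\to S_0$. For singular spectrum, given any $f\in L^2_0(X)$ one has $\langle T^n f,f\rangle=\hat{\sigma}_f(n)$, where $\sigma_f$ is the spectral measure of $f$. Since $\hat{\sigma}_f(m_n)\to\langle S_0 f,f\rangle$, any nonzero limit rules out an absolutely continuous component of $\sigma_f$ by Riemann--Lebesgue, and a standard density argument upgrades this to singularity of the maximal spectral type. Finally, ergodicity of $T$ follows from $\sum s_n=\sum r_n=\infty$ and ergodicity of $S$ via the usual towerplex argument, while absence of eigenvalues can be arranged by choosing the $h_n$ diophantine relative to the candidate eigenvalues of $S$ and relative to $R$'s rigidity sequence, exploiting the weak mixing of $R$.

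The main obstacle is the joint tuning of the parameters: the weak-convergence requirement wants $T$ to look like $S$ along $m_n$, while the weak-mixing requirement wants the multiplexing to disturb $S$'s spectrum enough to suppress any latent eigenvalues. Once the schedule $(K_n,h_n,\epsilon_n,\mu(X_n))$ is chosen so the two coexist, each verification is a routine consequence of Lemma \ref{rescalinglem} combined with the Borel--Cantelli bound $\sum\mu(E_n)<\infty$ coming from $\sum\epsilon_n<\infty$.
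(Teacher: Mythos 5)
Your setup (multiplexing with $R$ rigid weak mixing on the dissipating component $X_n$, $S$ on $Y_n$, $r_n=1/2$, $s_n=1/(2(n+2))$, and the use of Lemma \ref{rescalinglem} plus the summable error sets $E_n$ to carry the weak limit $S^{m_n}\to S_0$ over to $T^{m_n}\to S_0$) matches the paper. The genuine gap is in how you get singular spectrum. You argue that $\hat{\sigma}_f(m_n)\to\langle S_0f,f\rangle$ and that ``any nonzero limit rules out an absolutely continuous component of $\sigma_f$ by Riemann--Lebesgue.'' That inference is false: writing $\sigma_f=\sigma_{ac}+\sigma_{s}$, Riemann--Lebesgue gives $\hat{\sigma}_{ac}(m_n)\to 0$, so a nonzero limit of $\hat{\sigma}_f(m_n)$ only shows $\sigma_{s}\neq 0$; it in no way forces $\sigma_{ac}=0$, and no ``density argument'' upgrades it to purity of the maximal spectral type. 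Worse, the criterion is vacuous in the case the theorem is really built for: when $S$ is strong mixing (Corollaries \ref{countable} and \ref{all_orders}), $\langle S_0f,f\rangle=0$ for every mean-zero $f$, so your test detects nothing. The weak limit $S_0$ is not the source of singularity at all.

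The paper's mechanism is different and is the actual content of the proof: singular spectrum comes from the dissipating rigid component via Fayad's criterion (Proposition \ref{slowcoal}). One chooses $\ell_i$ to be ``strong'' rigid times of $R$ (and rigid multiples of rigid times), so that $T$ is nearly the identity on $X_n$ along these times; since $r_n=1/2$ and $s_n=1/(2(n+2))$ give $\mu(X_n)\asymp 1/n$, one has $\sum_n\mu(X_n)=\infty$ with the $X_n$ approximately independent, hence (divergence Borel--Cantelli) almost every point lies in $X_n$ infinitely often. For such points the averages $\frac1n|\sum_{i<n}f(T^{\ell_i}x)|$ have positive $\limsup$ on a positive-measure set, and Fayad's proposition yields singular maximal spectral type. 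Note this uses the divergence of $\sum\mu(X_n)$, not the convergence $\sum\mu(E_n)<\infty$ you invoke; the latter only controls $T=\lim T_n$. Your plan as written would need to be replaced by (or supplemented with) this pointwise argument along the rigid times of $R$ to establish singularity, in particular in the mixing case.
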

Prior to sketching a proof to the previous theorem, we will use the following result from \cite{Fay04}. 
\begin{proposition}[B. Fayad]
\label{slowcoal}
Let $(X,\mathcal{B},\mu,T)$ be an invertible ergodic measure preserving system. If for any complex 
nonzero, mean zero function $f\in L^2(X,\mu)$, there exists a measurable set $E\subset X$ 
with $\mu (E) > 0$, and a strictly increasing sequence $\ell_n$, such that for every $x\in E$, 
we have 
\[
\limsup_{n\to \infty} \frac{1}{n} | \sum_{i=0}^{n-1} f(T^{\ell_i}x)| > 0, 
\]
then the maximal spectral type of the unitary operator associated to 
\newline \noindent $(X,\mathcal{B},\mu,T)$ is singular. 
\end{proposition}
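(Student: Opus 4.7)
The plan is to argue by contrapositive. Suppose the maximal spectral type of the Koopman operator $U_T$ has a nonzero absolutely continuous component with respect to Lebesgue measure $\lambda$ on $S^1$. The aim is to exhibit a nonzero mean-zero $f\in L^2(X,\mu)$ violating the hypothesis. By the Hellinger--Hahn decomposition, choose $f$ with $\int f\,d\mu=0$, $\|f\|_2>0$, and spectral measure $\sigma_f\ll\lambda$; then, by applying functional calculus to restrict to a Borel band on which the spectral density $h=d\sigma_f/d\lambda$ is bounded, arrange that both $f\in L^\infty(\mu)$ and $h\in L^\infty(\lambda)$.

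For any strictly increasing sequence $(\ell_n)\subset\mathbb{Z}$, set $A_nf=\frac{1}{n}\sum_{i=0}^{n-1}U_T^{\ell_i}f$ and $K_n(z)=\frac{1}{n}\sum_{i=0}^{n-1}z^{\ell_i}$. By the spectral theorem and orthogonality of distinct characters,
\[
\|A_nf\|_{L^2(\mu)}^2=\int_{S^1}|K_n(z)|^2\,h(z)\,d\lambda(z)\le\|h\|_\infty\|K_n\|_{L^2(\lambda)}^2=\frac{\|h\|_\infty}{n}.
\]
This polynomial $L^2$ decay is the quantitative input that makes the remainder of the argument close; without the bounded-density restriction one would only obtain $o(1)$ decay, which would not be enough.

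The key step is to upgrade the rate $\|A_nf\|_2=O(n^{-1/2})$ to a.e.\ pointwise convergence $A_nf(x)\to 0$. Take $n_k=\lceil k^2\rceil$; then $\sum_k\|A_{n_k}f\|_2^2\le\|h\|_\infty\sum_k k^{-2}<\infty$, so Chebyshev together with Borel--Cantelli gives $A_{n_k}f\to 0$ $\mu$-a.e. To interpolate to the full sequence, for $n\in[n_k,n_{k+1})$ write
\[
A_nf-A_{n_k}f=\Bigl(\frac{n_k}{n}-1\Bigr)A_{n_k}f+\frac{1}{n}\sum_{i=n_k}^{n-1}U_T^{\ell_i}f.
\]
The second summand is bounded uniformly in $x$ by $(1-n_k/n_{k+1})\|f\|_\infty$, which tends to $0$ because $n_{k+1}/n_k=(1+1/k)^2\to 1$, and the first summand tends to zero at a.e.\ $x$ by the subsequential convergence. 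Hence $A_nf(x)\to 0$ $\mu$-a.e., for the arbitrary $(\ell_n)$. The main obstacle is precisely this simultaneous balancing of sparsification (for Borel--Cantelli summability) with density (for the uniform gap bound); the choice $n_k=k^2$ works only thanks to the polynomial $L^2$ rate secured by boundedness of $h$.

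Once a.e.\ convergence $A_nf(x)\to 0$ is established for every strictly increasing $(\ell_n)$, the hypothesis must fail for this specific $f$: given any $E$ with $\mu(E)>0$ and any such sequence, a full-measure subset of $E$ consists of points $x$ with $\limsup_n|A_nf(x)|=0$, so in particular at least one $x\in E$ violates the requirement $\limsup>0$. By the contrapositive, if the hypothesis holds for every nonzero mean-zero $f$, then the maximal spectral type of $U_T$ must be purely singular with respect to $\lambda$, as claimed. $\Box$
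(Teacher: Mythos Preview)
The paper does not prove this proposition; it is quoted from Fayad \cite{Fay04} and used as a black box, so there is no paper proof to compare against.

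Your overall strategy---contrapositive via a function in the absolutely continuous spectral subspace, the estimate $\|A_nf\|_2^2\le\|h\|_\infty/n$ from Parseval, the subsequence $n_k=k^2$ plus Borel--Cantelli, then interpolation across the gaps---is sound and is essentially the standard route to this result. There is, however, a gap in the reduction step. You assert that by ``applying functional calculus to restrict to a Borel band'' you can arrange \emph{both} $h\in L^\infty(\lambda)$ and $f\in L^\infty(\mu)$. Functional calculus with the bounded multiplier $1_B$ does produce a vector $1_B(U_T)f$ whose spectral density is bounded, but $1_B(U_T)$ is merely a bounded operator on $L^2$: spectral projections need not preserve $L^\infty(\mu)$, and you give no argument for why the resulting function should be essentially bounded. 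Since your interpolation step relies on the pointwise bound $(1-n_k/n_{k+1})\|f\|_\infty$, the proof as written does not close.

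The repair is easy and leaves the rest intact: drop the $L^\infty(\mu)$ requirement on $f$ and control the interpolation remainder in $L^2$ instead. Setting $G_k(x)=\sup_{n_k\le n<n_{k+1}}\bigl|\tfrac{1}{n}\sum_{i=n_k}^{n-1}f(T^{\ell_i}x)\bigr|$, one has the crude bound $G_k\le\tfrac{1}{n_k}\sum_{i=n_k}^{n_{k+1}-1}|f\circ T^{\ell_i}|$, hence $\|G_k\|_2\le\tfrac{n_{k+1}-n_k}{n_k}\|f\|_2\le Ck^{-1}\|f\|_2$. Thus $\sum_k\|G_k\|_2^2<\infty$, giving $G_k\to 0$ a.e. Combined with $A_{n_k}f\to 0$ a.e.\ this yields $A_nf\to 0$ a.e.\ exactly as you wanted, using only $f\in L^2$ with bounded spectral density---which is what functional calculus genuinely delivers.
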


\noindent {\bf Proof of Theorem \ref{towerplex0}:} 
Let $S$ be an invertible ergodic measure preserving transformation with limit 
$S_0 = w^{*}-\lim_{n\to \infty} S^{m_n}$.  
We can define a rigid weak mixing transformation $R$ such that the dissipating component $R$ 
in the multiplexed transformation $T = \mathcal{M}( S,R, \mathcal{P} )$ 
will allow $T$ to satisfy the previous proposition. 
We still require that parameters $r_n$ and $s_n$ have the same properties as in \cite{Towerplex1} 
except with roles reversed. 
In particular, $r_n$ for $R$ satisfies $r_n = 1/2$, and $s_n = 1/{2(n+2)}$. 
This ensures that the base of $R_n$, $X_n$, satisfies 
$\lim_{n\to \infty} \mu(X_n) = 0$ and $\sum_{n=1}^{\infty} \mu(X_n)=\infty$ with the $X_n$ 
approximately independent. 
The same technique to establish the rigidity sequence in \cite{Towerplex1} can be used 
to establish weak convergence to $S_0$ along $m_n$. 
Ergodicity and weak mixing may be established in a similar manner as in \cite{Towerplex1}. 
Singular spectrum is established using the previous proposition and the fact that 
almost every point falls in a subset of $X_n$ infinitely often. The transformation $R$ is defined 
such that $\ell_i$ are "strong" rigid times, and rigid multiples of rigid times.  $\Box$ 
\begin{corollary}
\label{countable}
Suppose $S$ is an invertible strong mixing transformation. There exist an invertible rigid transformation $R$, 
and parameter $\mathcal{P}$ such that 
\[
T = \mathcal{M}(S,R,\mathcal{P}) 
\]
is strong mixing with singular spectrum. 
\end{corollary}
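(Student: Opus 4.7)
The plan is to invoke the multiplexing construction exactly as in Theorem \ref{towerplex0}, but with a strong mixing $S$ in the first slot and with the rigid transformation $R$ in the second slot relieved of any weak mixing requirement. The output $T = \mathcal{M}(S,R,\mathcal{P})$ will inherit strong mixing from $S$ through the slow strong mixing construction of Section 4, and it will inherit singular spectrum through the Fayad criterion of Proposition \ref{slowcoal}.

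First I would fix the parameter schedule: $r_n = \tfrac{1}{2}$ and $s_n = \tfrac{1}{2(n+2)}$, so that the rigid component returns at a fixed proportion while $\mu(X_n) \to 0$ and $\sum_n \mu(X_n) = \infty$; choose $\epsilon_n$ with $\sum \epsilon_n < \infty$ and $h_n$ growing fast enough to swallow all approximation errors accumulated during the rescaling steps. Because the events $\{x \in X_n\}$ are approximately independent across stages, a Borel--Cantelli argument gives that the set $E = \limsup_n X_n$ has full measure, and this is the candidate set on which the Fayad criterion will be verified.

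Next I would establish strong mixing of $T$. The key input is Lemma \ref{rescalinglem}: the strong mixing of $S_n$ on $Y_n$ is inherited by $S_{n+1}$ up to an error that is summable in $n$, and since $\mu(X_n)\to 0$ the $R_n$-component contributes a vanishing correction to $T$. Iterating the lemma and passing to a refining sequence of partitions $P_n$ that generates $\mathcal{B}$, a standard approximation argument upgrades the pairwise decay of correlations on $P_n$ to all measurable $A,B$, yielding
\[
\lim_{i\to\infty}\mu(T^i A \cap B) = \mu(A)\mu(B).
\]
Finally, for singular spectrum, I would pick $R$ as an invertible rigid transformation (for example, a rank-one cutting and stacking) with a prescribed fast rigidity sequence $(\rho_k)$, and then let $\ell_i$ be a telescoped sequence of rigid times of $R$ (and rigid multiples thereof), chosen so that infinitely many $\ell_i$ fall inside the time window during which $T \equiv R_n$ on $X_n$. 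For any nonzero mean-zero $f\in L^2$ and almost every $x\in E$ we then get
\[
\limsup_{N\to\infty}\Bigl|\frac{1}{N}\sum_{i=0}^{N-1} f(T^{\ell_i}x)\Bigr| > 0,
\]
so Proposition \ref{slowcoal} forces the maximal spectral type of $T$ to be singular; combined with the previous step, absolute continuity of any spectral component is ruled out and we obtain a strong mixing $T$ with continuous singular spectrum.

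The main obstacle is calibrating the two competing regimes. Strong mixing of $T$ requires $\mu(X_n) \to 0$ quickly enough and the tower heights $h_n$ to dominate the accumulated rescaling errors, so that the rigid block essentially dissipates. The singular spectrum side demands the opposite: rigid times of $R$ must remain detectable by the Fayad criterion on an infinite tail of indices, which means $(\rho_k)$ must grow fast enough relative to $h_n$ that one can keep finding $\ell_i$ landing in the $R_n$-window. The resolution is the same trade-off executed in Theorem \ref{towerplex0}: take $\rho_k$ to grow much faster than $h_n$, and draw the $\ell_i$ from rigid times introduced at early stages so that they survive under the subsequent rescalings $\psi_n, \tau_n$. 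Since here we do not need $R$ itself to be weak mixing, this balancing is strictly easier than in Theorem \ref{towerplex0}.
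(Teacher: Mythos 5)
Your proposal is correct and follows essentially the route the paper intends: the corollary is stated without a separate proof precisely because it is the combination of the strong-mixing inheritance machinery of Sections 4--5 (the towerplex with $r_n=\tfrac12$, $s_n=\tfrac{1}{2(n+2)}$, isomorphism chain consistency, and Lemma \ref{rescalinglem}) with the singular-spectrum argument of Theorem \ref{towerplex0} (Borel--Cantelli on the approximately independent $X_n$ with $\sum_n\mu(X_n)=\infty$, plus Fayad's criterion along rigid times of $R$). Your only slight misstatement is that strong mixing ``requires $\mu(X_n)\to 0$ quickly''---the construction in fact keeps $\mu(X_n)$ decaying slowly (between $1/(n+2)$ and $1/n$) and relies on the chain-consistency lemma rather than fast decay---but your chosen parameter schedule is the correct one, so this does not affect the argument.
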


\begin{corollary}
\label{countable}
Suppose $S$ is an invertible ergodic measure preserving transformation, and $(m_n)_{n=1}^{\infty}$ 
is a sequence such that the weak closure of $\{ S^{m_n}: n\in \natural \}$ contains a countable set of limit points 
$\{ S_k : k\in \natural \}$. Then there exists a weak mixing transformation $T$ with singular spectrum 
such that the weak closure of $\{ T^{m_n}: n\in \natural \}$ contains the same countable set of limit points 
$\{ S_k : k\in \natural \}$. 
\end{corollary}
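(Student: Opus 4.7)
The plan is to observe that the construction underlying Theorem \ref{towerplex0} actually transfers \emph{every} subsequential weak limit of $S^{m_n}$ to the multiplexed transformation $T$, not merely a single designated $S_0$. The proof of that theorem uses two facts that do not depend on which weak limit is targeted: first, weak mixing and singular spectrum of $T$ come from the shrinking dissipating component ($\mu(X_\ell)\to 0$ at stage $\ell$) together with rigidity of $R$ along a sparse sequence, via Proposition \ref{slowcoal}; second, because $\mu(Y_\ell)\to 1$ and on $Y_\ell$ the map $T$ coincides with $S_\ell \cong S$ up to a correction of measure $O(\epsilon_\ell+1/h_\ell)$ (Lemma \ref{rescalinglem}), iterates $T^m$ are weakly close to $S^m$ under the isomorphism, provided the stage parameters at $\ell$ are large enough relative to $m$.

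To exploit this, for each $k\in\natural$ fix a subsequence $(n^{(k)}_j)_{j\ge 1}$ of $\natural$ such that $S^{m_{n^{(k)}_j}}\to S_k$. Enumerate the pairs $(k,j)$ diagonally as $(k_\ell,j_\ell)_{\ell\ge 1}$, hitting every $k$ infinitely often, and build the parameter collection $\mathcal{P}$ so that at stage $\ell$ one has $\epsilon_\ell+1/h_\ell<2^{-\ell}$ and $h_\ell$ exceeds $m_{n^{(k_\ell)}_{j_\ell}}$. Let $T=\mathcal{M}(S,R,\mathcal{P})$ with $R$ a rigid weak mixing transformation whose rigidity sequence is chosen (as in Theorem \ref{towerplex0}) to force singular spectrum via Proposition \ref{slowcoal}. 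The singular spectrum and weak mixing of $T$ then follow exactly as in that theorem, since those arguments never consult the specific limit $S_0$.

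For the weak closure claim, fix $k$ and any pair of measurable sets $A,B$. By the second approximation above, for every $\ell$ with $k_\ell=k$ one has
\[
|\mu(T^{m_{n^{(k)}_{j_\ell}}}A\cap B) - \mu(S^{m_{n^{(k)}_{j_\ell}}}A\cap B)| < 2^{-\ell+1}.
\]
Passing to the limit along the infinite subsequence $\{\ell:k_\ell=k\}$ recovers $S_k$ as a weak limit of $T^{m_n}$, so $S_k$ lies in the weak closure of $\{T^{m_n}:n\in\natural\}$. Since this holds for each $k\in\natural$, the countable set $\{S_k:k\in\natural\}$ is contained in the weak closure of $\{T^{m_n}\}$, as required.

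The main obstacle is the simultaneous synchronization: each $k$ demands approximation at infinitely many stages, while each stage hosts at most one pair $(k_\ell,j_\ell)$. The diagonal enumeration resolves this in principle, but forces $h_\ell$ to grow past the assigned iterate $m_{n^{(k_\ell)}_{j_\ell}}$ before advancing to stage $\ell+1$, and it forces the Lemma \ref{rescalinglem} approximation to be threaded through arbitrarily many successive isomorphism-chain rescalings. This is compatible with the usual parameter constraints (summable $\epsilon_\ell$, $\mu(X_\ell)\to 0$, divergent $\sum r_\ell$ and $\sum s_\ell$) because the iterates $m_{n^{(k_\ell)}_{j_\ell}}$ are fixed in advance and $h_\ell$ may be chosen arbitrarily large at each stage, independent of the rigidity sequence of $R$.
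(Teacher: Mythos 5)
Your overall route is the one the paper intends: the corollary is stated without a separate proof precisely because it is meant to follow from the construction of Theorem \ref{towerplex0}, and the natural way to get countably many limit points is the diagonalization you describe --- enumerate pairs $(k,j)$ so each $k$ recurs infinitely often, designate one iterate $m=m_{n^{(k_\ell)}_{j_\ell}}$ per stage, and let weak mixing and singular spectrum come from the dissipating rigid component via Proposition \ref{slowcoal}, exactly as in the theorem. So there is no divergence of method; the issue is with the one quantitative step you actually display.

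The inequality $|\mu(T^{m}A\cap B)-\mu(S^{m}A\cap B)|<2^{-\ell+1}$ does not follow from the constraints you impose, namely $\epsilon_\ell+1/h_\ell<2^{-\ell}$ together with $h_\ell>m$. The condition $h_\ell>m$ is not the operative one: $Y_\ell$ is $T_\ell$-invariant and $T_\ell=S_\ell$ there, so the stage-$\ell$ transformation agrees with $S_\ell$ for \emph{all} iterates on $Y_\ell$ regardless of the tower height. What threatens the estimate is everything built after stage $\ell$: $T$ differs from $T_\ell$ on $\bigcup_{j\ge\ell}E_j$ with $\mu(E_j)<\kappa\epsilon_j$, so for the $m$-th iterate the disagreement set can have measure of order $m\sum_{j\ge\ell}\epsilon_j$, and a uniform choice $\epsilon_j<2^{-j}$ does not make this small when $m$ is large. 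The repair is easy but must be said: since $m$ is designated at stage $\ell$, all later parameters may be chosen with knowledge of it --- for instance take $\epsilon_j<2^{-j}/(\kappa m')$ for every iterate $m'$ designated at stages $\le j$, which gives $\mu\{x: T^{m}x\neq T_\ell^{m}x\}\le\sum_{j\ge\ell}m\kappa\epsilon_j\le 2^{-\ell+1}$; alternatively, run the chaining of Lemma \ref{rescalinglem} with a summable budget $\delta_j$, using that its estimate is uniform in the iterate $i$, and pass from $S_\ell$ to $S_N$ to $T$ for $N$ large depending on the fixed $m$. You gesture at the second mechanism in your final paragraph, but as written the $2^{-\ell+1}$ bound is attributed to the wrong parameters, and a reader following the stated constraints literally would be stuck. (One further point worth a sentence in a full write-up: the assertion $T^{m_n}\to S_k$ presupposes the identification of $S$'s space with the limit of the $Y_n$ via the composed maps $\psi_n$, whose displacements $\mu(p\triangle\psi_n p)$ must be summable; this is the same convention implicit in Theorem \ref{towerplex0}.) With that bookkeeping corrected, your diagonal argument goes through and is essentially the paper's.
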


\subsection{Multiple mixing towerplexes}
In \cite{Host91}, it is shown that any mixing transformation with singular spectrum 
is mixing of all orders.  This implies the following corollary. 
\begin{corollary}
\label{all_orders}
Given any strong mixing transformation $S$, 
there exist a rigid transformation $R$, 
and parameter $\mathcal{P}$ such that 
\[
T = \mathcal{M}(S,R,\mathcal{P}) 
\]
is mixing of all orders. 
\end{corollary}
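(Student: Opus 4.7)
The plan is to combine the preceding Corollary (labeled \texttt{countable}, giving a strong mixing $T = \mathcal{M}(S,R,\mathcal{P})$ with singular spectrum) with Host's theorem from \cite{Host91}, which asserts that every strong mixing transformation with singular maximal spectral type is mixing of all orders. So the proof will be essentially a two-line deduction.

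Concretely, I would proceed as follows. First, apply the preceding corollary to the given strong mixing $S$: this produces an invertible rigid transformation $R$ and a parameter collection $\mathcal{P}$ such that $T = \mathcal{M}(S,R,\mathcal{P})$ is strong mixing and has continuous singular spectrum. Second, since $T$ is strong mixing with singular spectrum, invoke Host's theorem to conclude $T$ is mixing of all orders. The required $R$ and $\mathcal{P}$ are precisely those furnished by the previous corollary.

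There is no real obstacle at this step; all of the substantive work has been done already. The genuine content sits in Theorem \ref{towerplex0} and its strong-mixing specialization, where one must verify that the towerplex construction simultaneously preserves strong mixing inherited from $S$ (via isomorphism chain consistency, see Lemma \ref{rescalinglem}) and produces singular spectrum through the dissipating rigid component $R$ (via Fayad's Proposition \ref{slowcoal} applied along strong rigid times of $R$ and their rigid multiples). Once those properties are in hand, the present corollary is an immediate application of Host's theorem, and no additional parameter tuning or estimation is required.
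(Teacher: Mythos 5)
Your proposal matches the paper's own argument: the paper deduces this corollary immediately by combining the preceding corollary (that $T=\mathcal{M}(S,R,\mathcal{P})$ can be made strong mixing with singular spectrum) with Host's theorem from \cite{Host91} that mixing transformations with singular spectrum are mixing of all orders. Your two-line deduction, including the observation that all substantive work lies in Theorem \ref{towerplex0} and its specialization, is exactly the intended proof.
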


\noindent 
{\bf Question:} Given a mixing transformation $S$, 
is it possible to construct a rigid transformation $R$ and parameter $\mathcal{P}$ 
such that 
$T = \mathcal{M}(S,R,\mathcal{P})$ has the same higher order mixing 
properties as $S$ and has singular spectrum? 
This would be sufficient to prove that strong mixing 
implies mixing of all orders. 

\subsection*{Acknowledgements}
The author wishes to thank Vitaly Bergelson for pointing out the question 
in \cite{Berg96_2}.

\appendix
\section{Over-recurrent sets for mixing transformations}

\begin{figure}[h]
\caption{Over-recurrent sets}

\begin{tikzpicture}
{[line width=2pt]
{[black]
\draw (1+.5,0) -- (1+4.5,0) node[midway,below](base) {$B_1$=base, $A_1$=red};
\draw (1+.5,.4) -- (1+4.5,.4);
\draw (1+.5,.8) -- (1+4.5,.8);
\draw (1+.5,1.2) -- (1+4.5,1.2) node[near start, below](s1) {};
\node[black, left=of s1]{$m_1$};
\draw (1+.5,1.6) -- (1+4.5,1.6);
\draw (1+.5,2) -- (1+4.5,2);

\draw (1+.5,2.8) -- (1+4.5,2.8);
\draw (1+.5,3.2) -- (1+4.5,3.2);
\draw (1+.5,3.6) -- (1+4.5,3.6);
\draw (1+.5,4) -- (1+4.5,4) node[near start, below](s2) {};
\node[black, left=of s2]{$m_1$};
\draw (1+.5,4.4) -- (1+4.5,4.4);
\draw (1+.5,4.8) -- (1+4.5,4.8);

\draw (1+.5,5.6) -- (1+4.5,5.6);
\draw (1+.5,6) -- (1+4.5,6);
\draw (1+.5,6.4) -- (1+4.5,6.4);
\draw (1+.5,6.8) -- (1+4.5,6.8) node[near start, below](s3) {};
\node[black, left=of s3]{$m_1$};
\draw (1+.5,7.2) -- (1+4.5,7.2);
\draw (1+.5,7.6) -- (1+4.5,7.6);

\draw (1+.5,8.4) -- (1+4.5,8.4);
\draw (1+.5,8.8) -- (1+4.5,8.8);
\draw (1+.5,9.2) -- (1+4.5,9.2);
\draw (1+.5,9.6) -- (1+4.5,9.6) node[near start, below](s4) {};
\node[black, left=of s4]{$m_1$};
\draw (1+.5,10) -- (1+4.5,10);
\draw (1+.5,10.4) -- (1+4.5,10.4);

\draw (1+.5,11.2) -- (1+4.5,11.2);
\draw (1+.5,11.6) -- (1+4.5,11.6);
\draw (1+.5,12) -- (1+4.5,12);
\draw (1+.5,12.4) -- (1+4.5,12.4) node[near start, below](s5) {};
\node[black, left=of s5]{$m_1$};
\draw (1+.5,12.8) -- (1+4.5,12.8);
\draw (1+.5,13.2) -- (1+4.5,13.2);

\draw (1+.5,14) -- (1+4.5,14);
\draw (1+.5,14.4) -- (1+4.5,14.4);
\draw (1+.5,14.8) -- (1+4.5,14.8);
\draw (1+.5,15.2) -- (1+4.5,15.2) node[near start, below](s6) {};
\node[black, left=of s6]{$m_1$};
\draw (1+.5,15.6) -- (1+4.5,15.6);
\draw (1+.5,16) -- (1+4.5,16);
}
}
{[line width=4pt]
{[red]
\draw (1+.5,0) -- (1+2,0);
\draw (1+2,1.2) -- (1+4.5,1.2);

\draw (1+2.25,2.8) -- (1+2.75,2.8);
\draw (1+.5,3.2) -- (1+1.5,3.2);
\draw (1+2.75,3.6) -- (1+4.5,3.6);
\draw (1+1.5,4.4) -- (1+2.25,4.4);

\draw (1+.5,6.8) -- (1+4.4,6.8);

\draw (1+2,8.4) -- (1+3,8.4);
\draw (1+3.1,9.2) -- (1+4.5,9.2);
\draw (1+.6,10.4) -- (1+1.9,10.4);

\draw (1+2.25,12.8) -- (1+2.75,12.8);
\draw (1+.5,11.6) -- (1+1.5,11.6);
\draw (1+2.75,13.2) -- (1+4.5,13.2);
\draw (1+1.5,11.2) -- (1+2.25,11.2);

\draw (1+2.25,12.8) -- (1+2.75,12.8);
\draw (1+.5,11.6) -- (1+1.5,11.6);
\draw (1+2.75,13.2) -- (1+4.5,13.2);
\draw (1+1.5,11.2) -- (1+2.25,11.2);

\draw (1+2,15.2) -- (1+3,15.2);
\draw (1+2.9,16) -- (1+4.5,16);
\draw (1+.5,14.4) -- (1+2.1,14.4);
}
}
\shade [ball color=blue] (1+2.25,16.9) circle (.16) node[](residual){};
\draw[->] (1+3.0,16.9)--(1+2.5,16.9) node[blue](myarrow1){};
\node[purple] at (1+3.25,16.8) {$\epsilon_1$};
\node[black,left=of residual]{$X$};
{[line width=2pt]
{[black]
\draw (8+.5,0) -- (8+4.5,0) node[midway,below] {$I_1$=orange, $A_2$=blue, $A_1$=red};
\draw (8+.5,.4) -- (8+4.5,.4);
\draw (8+.5,.8) -- (8+4.5,.8);
\draw (8+.5,1.2) -- (8+4.5,1.2);
\draw (8+.5,1.6) -- (8+4.5,1.6);
\draw (8+.5,2) -- (8+4.5,2);

\draw (8+.5,2.8) -- (8+4.5,2.8);
\draw (8+.5,3.2) -- (8+4.5,3.2);
\draw (8+.5,3.6) -- (8+4.5,3.6);
\draw (8+.5,4) -- (8+4.5,4);
\draw (8+.5,4.4) -- (8+4.5,4.4);
\draw (8+.5,4.8) -- (8+4.5,4.8);

\draw (8+.5,5.6) -- (8+4.5,5.6);
\draw (8+.5,6) -- (8+4.5,6);
\draw (8+.5,6.4) -- (8+4.5,6.4);
\draw (8+.5,6.8) -- (8+4.5,6.8);
\draw (8+.5,7.2) -- (8+4.5,7.2);
\draw (8+.5,7.6) -- (8+4.5,7.6);

\draw (8+.5,8.4) -- (8+4.5,8.4);
\draw (8+.5,8.8) -- (8+4.5,8.8);
\draw (8+.5,9.2) -- (8+4.5,9.2);
\draw (8+.5,9.6) -- (8+4.5,9.6);
\draw (8+.5,10) -- (8+4.5,10);
\draw (8+.5,10.4) -- (8+4.5,10.4);

\draw (8+.5,11.2) -- (8+4.5,11.2);
\draw (8+.5,11.6) -- (8+4.5,11.6);
\draw (8+.5,12) -- (8+4.5,12);
\draw (8+.5,12.4) -- (8+4.5,12.4);
\draw (8+.5,12.8) -- (8+4.5,12.8);
\draw (8+.5,13.2) -- (8+4.5,13.2);

\draw (8+.5,14) -- (8+4.5,14);
\draw (8+.5,14.4) -- (8+4.5,14.4);
\draw (8+.5,14.8) -- (8+4.5,14.8);
\draw (8+.5,15.2) -- (8+4.5,15.2);
\draw (8+.5,15.6) -- (8+4.5,15.6);
\draw (8+.5,16) -- (8+4.5,16);
}
}
{[line width=4pt]
{[red]
\draw (8+.5,0) -- (8+2,0);
\draw (8+2,1.2) -- (8+4.5,1.2);

\draw[orange] (8+.5,0) -- (8+1.25,0);

\draw[blue] (8+.5,.4) -- (8+1.25,.4);
\draw[blue] (8+.5,.8) -- (8+1.25,.8);
\draw[blue] (8+.5,1.2) -- (8+1.25,1.2);
\draw[blue] (8+.5,1.6) -- (8+1.25,1.6);
\draw[blue] (8+.5,2) -- (8+1.25,2);

\draw (8+2.25,2.8) -- (8+2.75,2.8);
\draw (8+.5,3.2) -- (8+1.5,3.2);
\draw (8+2.75,3.6) -- (8+4.5,3.6);
\draw (8+1.5,4.4) -- (8+2.25,4.4);

\draw[blue] (8+.5,2.8) -- (8+1.25,2.8);
\draw[blue] (8+.5,3.6) -- (8+1.25,3.6);
\draw[blue] (8+.5,4) -- (8+1.25,4);
\draw[blue] (8+.5,4.4) -- (8+1.25,4.4);
\draw[blue] (8+.5,4.8) -- (8+1.25,4.8);

\draw (8+.5,6.8) -- (8+4.4,6.8);

\draw[blue] (8+.5,7.6) -- (8+1.25,7.6);
\draw[blue] (8+.5,5.6) -- (8+1.25,5.6);
\draw[blue] (8+.5,6) -- (8+1.25,6);
\draw[blue] (8+.5,6.4) -- (8+1.25,6.4);
\draw[blue] (8+.5,7.2) -- (8+1.25,7.2);

\draw (8+2,8.4) -- (8+3,8.4);
\draw (8+3.1,9.2) -- (8+4.5,9.2);
\draw (8+.7,10.4) -- (8+1.9,10.4);

\draw[blue] (8+.5,8.4) -- (8+1.25,8.4);
\draw[blue] (8+.5,8.8) -- (8+1.25,8.8);
\draw[blue] (8+.5,9.2) -- (8+1.25,9.2);
\draw[blue] (8+.5,9.6) -- (8+1.25,9.6);
\draw[blue] (8+.5,10) -- (8+1.25,10);
\draw[blue] (8+.5,10.4) -- (8+.7,10.4);

\draw (8+2.25,12.8) -- (8+2.75,12.8);
\draw (8+.5,11.6) -- (8+1.5,11.6);
\draw (8+2.75,13.2) -- (8+4.5,13.2);
\draw (8+1.5,11.2) -- (8+2.25,11.2);

\draw[blue] (8+.5,11.2) -- (8+1.25,11.2);
\draw[blue] (8+.5,12) -- (8+1.25,12);
\draw[blue] (8+.5,12.4) -- (8+1.25,12.4);
\draw[blue] (8+.5,12.8) -- (8+1.25,12.8);
\draw[blue] (8+.5,13.2) -- (8+1.25,13.2);

\draw (8+2,15.2) -- (8+3,15.2);
\draw (8+2.9,16) -- (8+4.5,16);
\draw (8+.5,14.4) -- (8+2.1,14.4);

\draw[blue] (8+.5,14) -- (8+1.25,14);
\draw[blue] (8+.5,14.8) -- (8+1.25,14.8);
\draw[blue] (8+.5,15.2) -- (8+1.25,15.2);
\draw[blue] (8+.5,15.6) -- (8+1.25,15.6);
\draw[blue] (8+.5,16) -- (8+1.25,16);
}
}
\shade [ball color=blue] (8+2.25,16.9) circle (.16) node[](residual){};
\draw[->] (8+3.0,16.9)--(8+2.5,16.9) node[blue](myarrow1){};
\node[purple] at (8+3.25,16.8) {$\epsilon_1$};
\node[black,left=of residual]{$X$};

\end{tikzpicture}
\label{fig:tower}
\end{figure}
For the example in Figure \ref{fig:tower}, $m_1 = 6$.  
The height is $m_1^2 = 36$. 
The set $A \supset A_1 \cup A_2$. For $i$ such that $0 \leq i < N_1 < m_1$, 
$T^i(A_2)$ will be contained in $A$ except for part of the top $m_1$-block. 
Since $m_1 > {1} / { ( \epsilon_1 a_2 ) }$, then the top $m$-block has measure 
less than $\epsilon_1 a_2$.  Thus, the proportion of $A_2$ that does not 
return to $A$ under $T^i$, $0 \leq i < N_1 < m_1$, is less than $\epsilon_1 \mu(A_2)$. 
A similar argument holds for $- N_1 < i \leq 0$.

\end{document}